\documentclass{article}
\usepackage[letterpaper,margin=1.0in]{geometry}
\usepackage{amsthm}

\usepackage{amsmath,amsfonts,bm,amssymb}

\def\eqref#1{equation~\ref{#1}}

\def\1{\bm{1}}

\def\eps{{\epsilon}}

\def\vzero{{\bm{0}}}

\def\vu{{\bm{u}}}
\def\vv{{\bm{v}}}

\def\vx{{\bm{x}}}
\def\vy{{\bm{y}}}

\def\mF{{\bm{F}}}
\def\mG{{\bm{G}}}

\def\mP{{\bm{P}}}

\DeclareMathAlphabet{\mathsfit}{\encodingdefault}{\sfdefault}{m}{sl}
\SetMathAlphabet{\mathsfit}{bold}{\encodingdefault}{\sfdefault}{bx}{n}

\def\gN{{\mathcal{N}}}
\def\gO{{\mathcal{O}}}

\def\gT{{\mathcal{T}}}
\def\gU{{\mathcal{U}}}
\def\gV{{\mathcal{V}}}

\def\gX{{\mathcal{X}}}

\def\sR{{\mathbb{R}}}

\newtheorem{thm}{Theorem}[section]
\newtheorem{dfn}{Definition}[section]

\newtheorem{lem}{Lemma}[section]
\newtheorem{asm}{Assumption}[section]

\newtheorem{prop}{Proposition}[section]

\usepackage{hyperref}
\usepackage{enumitem}
\usepackage[utf8]{inputenc} 
\usepackage[T1]{fontenc}    
\usepackage{hyperref}       
\usepackage{url}            
\usepackage{booktabs}       
\usepackage{amsfonts}       
\usepackage{nicefrac}       
\usepackage{microtype}      
\usepackage{graphicx}       
\usepackage[numbers,sort]{natbib}
\usepackage{multirow}
\usepackage{bbding}
\graphicspath{{media/}}     
\usepackage{color}
\usepackage{algpseudocode,algorithm}
\usepackage{nicefrac}
\usepackage{cancel}
\usepackage{graphicx} 
\usepackage{booktabs}
\usepackage{makecell}
\usepackage{multirow}
\usepackage{amssymb}

\usepackage{thmtools}
\usepackage{thm-restate}

\title{Halpern Iteration Achieves $\tilde{\mathcal{O}}(\epsilon^{-1/p})$ $p$th-Order Oracle Complexity for Monotone Variational Inequalities}

\author{Lesi Chen \textsuperscript{* 1} \qquad
Xinliang Zhang \textsuperscript{* 1} \qquad Hengyu Wang \textsuperscript{* 3 4} \\
Chengchang Liu \textsuperscript{5} \qquad
Yongchao Chen \textsuperscript{2 3}
\qquad Jingzhao Zhang \textsuperscript{1 2} \\
\vspace{-2mm} \\
\normalsize{\textsuperscript{1}IIIS, Tsinghua University\quad 
\textsuperscript{2} College of AI, Tsinghua University} \quad \textsuperscript{3} Apex Intelligence \\
\normalsize{
\textsuperscript{4} School of Mathematical Sciences, Tongji University} \\
\normalsize{
\textsuperscript{5} Department of Artificial Intelligence, Westlake University} \\
\vspace{-2mm} \\
\normalsize{ \texttt{ \{chenlc23, xinliang23@\}@mails.tsinghua.edu.cn, wanghengyu@apexin.ai}} \\
\normalsize{\texttt{
liuchengchang@westlake.edu.cn,
cyc@apexin.ai,
jingzhaoz@mail.tsinghua.edu.cn
}}}

\begin{document}
\maketitle
\begingroup
\begin{NoHyper}
\renewcommand\thefootnote{*}
\footnotetext{Equal contributions.}
\end{NoHyper}
\endgroup
\begin{abstract}
We study second- and higher-order methods for solving smooth monotone variational inequalities (MVI). 
Monteiro and Svaiter (SIAM J. Optim., 2012) showed that a second-order method, NPE, converges at the rate of $\mathcal{O}(T^{-1.5})$. For convex-concave minimax optimization, a subset of MVI problems, Chen, Liu, Luo, and Zhang (COLT 2025) recently improved the complexity to $\tilde{\mathcal{O}}( T^{-1.75})$ . However, it is open whether the conjectured complexity for MVI can be improved.
In this paper, by using a large-step inexact Halpern iteration, we propose a novel Halpern-NPE method that achieves an even faster rate of $\tilde{\mathcal{O}}(T^{-2})$ for solving MVIs.  We also provide the $p$th-order generalization of our method. We first introduce an Anchored Tensor Method (ATM) that achieves the rate of $\mathcal{O}(T^{-(p-1)})$, and then combine it with the Halpern iteration to achieve a faster convergence rate of $\tilde{\mathcal{O}}(T^{-p})$. This improves all prior results for $p \ge 2$ and matches the classical extragradient method for $p=1$.

\textbf{AI Usage.} Lesi Chen, Chengchang Liu, and Jingzhao Zhang shortlisted multiple open problems in optimization and submitted them to the auto-research platform of Apex Intelligence founded by Yongchao Chen. The team conducted extensive large-scale searches; Hengyu Wang first discovered a proof of the rate $\gO(T^{-(p-1)})$ with Claude Opus 4.6. Upon verifying the result, the authors conjectured a better result of $\tilde \gO(T^{-p})$, which Xinliang Zhang later found a proof with GPT 5.6 Sol. These results are then verified by Lesi Chen, Xinliang Zhang, Chengchang Liu, and Jingzhao Zhang. The authors also express their gratitude to independent researcher Junwei Zhou (zjw330501@gmail.com), as well as Xiaoyu Cao (caoxiaoyu@apexin.ai) and Huan Wang (wanghuan@apexin.ai) from Apex for their invaluable support during the proof search process.
\end{abstract}

\section{Introduction}

Let $\gX \subseteq\mathbb{R}^{d}$ be a nonempty compact convex set and let \(F:\gX \to \sR^d\) be a continuously differentiable monotone operator. We study the monotone variational inequality (MVI) problem \citep{facchinei2003finite}, which targets at finding an solution $\vx^* \in \gX$ such that
\begin{align} \label{eq:prob-mvi}
    \langle F(\vx^*), \vx - \vx^* \rangle \ge0 \quad \forall \vx \in \gX. 
\end{align}
Let $\gN_\gX(\vx)$ be the normal cone of $\gX$ at point $\vx$, equivalently, the subdifferential of the indicator function of~$\gX$. The MVI problem is equivalent to monotone inclusion problem $\vzero \in \mF(\vx^*) + \gN_\gX(\vx^*)$, which
captures a lot of optimization problems, especially solving game-theoretical equilibria \citep{kinderlehrer2000introduction,giannessi1995variational,nocedal1999numerical,ba2025doubly,jordan2025adaptive}. A typical example is the following  convex-concave minimax optimization problem \citep{goodfellow2020generative,cesa2006prediction}:
\begin{align} \label{eq:prob-minimax}
    \min_{\vu \in \gU} \max_{\vv \in \gV} \phi(\vu, \vv), 
\end{align}
which is an MVI problem for the gradient operator $\mF(\vu,\vv) = [\nabla_\vu \phi(\vu,\vv), - \nabla_\vv \phi(\vu,\vv)]$. The minimax optimization has wide applications in many machine learning problems such as adversarial training \citep{zhang2018mitigating}, AUC maximization~\citep{ying2016stochastic}, and distributionally robust optimization~\citep{carmon2022distributionally}. 

Following the oracle model established in classical textbooks \citep{nemirovskij1983problem,nesterov2018lectures}, we consider the $p$th-order algorithm class that can query the derivative information of $\mF$ up to order $p-1$:
\begin{equation} \label{eq:pth-order-oracle}
    (\mF(\vx), D \mF(\vx),\ldots, D^{p-1} \mF(\vx)).
\end{equation}
We also assume that the operator is $p$th-order $L_p$-smooth, \textit{i.e.}, $D^{p-1} \mF$ is $L_p$-Lipschitz continuous. The optimal oracle complexity for convex optimization has been settled, where the monotone operator $\mF(\vx) = \nabla f(\vx)$ is the gradient of a convex function $f$. Let $T$ be the total number of oracle calls. For first-order methods ($p=1$), \citet{nemirovskij1983problem} showed a lower bound of $\Omega(L_1/T^2)$, and
\citet{nesterov1983method} proposed the optimal method that achieves the matching convergence rate of $\gO(L_1/T^2)$. For second- and higher-order methods ($p\ge2$), \citet{monteiro2013accelerated} proposed near-optimal methods that converge to $\tilde \gO(L_2/T^{3.5})$, then \citet{kovalev2022first} and \citet{carmon2022optimal} independently improved it to $\gO(L_2/T^{3.5})$ in $p=2$ and $\gO(L_p/T^{(3p+1)/2})$ for general $ p\ge 2$. Importantly, \citet{arjevani2019oracle} provided matching lower bounds of $\Omega(L_p/T^{(3p+1)/2})$ for all $p$th-order methods.

However, the complexity of MVI beyond convex optimization is not fully understood besides first-order methods. For $p=1$, \citet{korpelevich1976extragradient} proposed the extragradient method with a convergence rate of $\gO(L_1/T)$, and \citet{nemirovski2004prox} showed a matching lower bound of $\Omega(L_1/T)$ in 2004. In contrast, the optimal oracle complexity for $p \ge 2$ has remained open for many years. In 2012, \citet{monteiro2012iteration} proposed the Newton Proximal Extragradient (NPE) method with a fast second-order convergence rate of $\gO(L_2/T^{1.5})$ for $p=2$, where the cost at each iteration is nearly the same as matrix inversion/multiplication \citep{duan2023faster,demmel2007fast}. Moreover, the $p$th-order generalization of NPE achieves the rate of $\gO(L_p/T^{(p+1)/2})$ in general \citep{bullins2022higher,adil2022optimal,huang2022approximation,lin2023monotone,jiang2022generalized,lin2022perseus,nesterov2023high}, which is speculated to be optimal in \citet{adil2022optimal,lin2022perseus}.

Very recently, \citet{chen2025solving} proposed a faster method tailored to minimax Problem (\ref{eq:prob-minimax}) that can achieve a new convergence rate of $\tilde \gO(L_2/T^{1.75})$ using second-order oracles and $\tilde \gO(L_p/T^{(3p+1)/4})$ using $p$th-order oracles~\citep{chen2026solving}. 
However, it is unknown whether this upper bound is optimal, as it persists a gap compared to the lower bound of $\Omega(L_2/T^{2.5})$ and $\Omega(L_p/T^{(3p-1)/2})$ for second-order and $p$th-order oracles, respectively~\citep{chen2026solving}. 
More importantly, it is also open whether the classical $\gO(L_p/T^{(p+1)/2})$ rate \citep{monteiro2012iteration,lin2022perseus} can be improved for general MVI, without leveraging the minimax problem structure.

\paragraph{Contributions.}
In this paper, we propose a superfast high-order method that solves the MVI problem at the convergence rate of $\tilde \gO(L_2/T^{2})$ for $p=2$ and $\tilde \gO(L_p/T^{p})$ for general $p \ge 2$. 
Our new upper bound achieves a significant improvement in the exponent compared to both the classical rate of $\gO(L_p/T^{(p+1)/2})$ for MVI and the recently established fast rate of $\tilde \gO(L_p/T^{(3p+1)/4})$ for minimax problems, because
\[
p > \frac{3p+1}{4} > \frac{p+1}{2}, \qquad \forall p \ge 2.
\]

\paragraph{Notations.}  We use $\Vert \, \cdot \, \Vert$ to denote the Euclidean norm for vectors and the spectral norm for matrices and tensors in a unified way.
We hide logarithmic factors in the notation $\tilde \gO(\,\cdot\,)$. Also, we use the notations $\gO_p(\,\cdot\,)$ and $\tilde \gO_p(\,\cdot\,)$ to hide the costants that depend on $p$. We denote $D^q \mF$ as the $q$th-order derivative of an operator $\mF: \gX \to \sR^d$. We also let ${\rm Proj}_{\gX}$ be the projection operator of $\vx$ onto the set $\gX$. 

\subsection{Technical Overview} \label{subsec:tech}
The algorithm and analysis is surprisingly simple. We achieve the new result based on the Halpern iteration \citep{halpern1967fixed} for finding the fixed point of a non-expansive operator $\mP: \sR^d \rightarrow \sR$, which takes the form of
\begin{equation} \label{eq:halpern}
    \vx_{t+1} = \beta_t \vx_0 + (1- \beta_t) \mP(\vx_t),
\end{equation}
where $\{\beta_t \}_{t=0}^{T-1} \in (0,1)$ is a decreasing sequence and the initial point $\vx_0$ is also called the anchor \citep{yoon2021accelerated,lee2021fast}. A simple and common choice of $\beta_t = \gO(t^{-1})$ can lead to the optimal convergence rate to the fixed point \(\| \vx_T - \mP(\vx_T) \| \le \gO(T^{-1})\) \citep{lieder2020convergence,halpern1967fixed,alacaoglu2024revisiting,cai2024variance}.
When solving MVI problems, a natural candidate operator $\mP$ that satisfies non-expansiveness is the resolvent/proximal operator $\mP_{\eta (\mF+\gN_\gX)} = ({\rm Id} +  \eta (\mF+\gN_\gX))^{-1})$ \citep{rockafellar1976monotone,ryu2016primer}.
Let ${\rm res}(\vx) = \| \vx - \mP_{\eta(\mF+\gN_\gX)}(\vx) \| / \eta$ be the normalized proximal residual then the Halpern iteration guarantees the convergence rate of ${\rm res}(\vx_T) = \gO((\eta T)^{-1})$. For first-order methods, we can take $\eta = 1/L_1$, and the resolvent can be solved in $\gO(1)$ gradient steps. As a result, it can convert a method with the optimal $\gO(L_1/T)$ convergence rate in the gap function \citep{nesterov2007dual,korpelevich1976extragradient} to a counterpart with the same rate in terms of the stronger notions, known as the
gradient norm \citep{yoon2021accelerated,lee2021fast} or tangent residual \citep{cai2022finite,cai2023accelerated,cai2024accelerated}.

Maybe surprisingly, in the high-order setting, we found that the power of Halpern iteration \citep{diakonikolas2020halpern,cai2024variance} or anchoring technique \citep{ryu2019ode,yoon2021accelerated,lee2021fast} is not limited to converting solution concepts as in $p=1$, but also can lead to a faster convergence rate for $p \ge 2$. 
We briefly introduce how to prove the new results as follows.

\paragraph{For the case p=2.} We show that a simple algorithm called Halpern-NPE can achieve the fast convergence rate of $\tilde \gO(T^{-2})$ by using the following two observations:
\begin{enumerate}
    \item Since the proximal subproblem is $L_2$-second-order smooth and $\eta^{-1}$-strongly monotone, the NPE method \citep{monteiro2012iteration,huang2022approximation,lin2022perseus,adil2022optimal} achieve the complexity of $N_t= \tilde \gO( (\eta L_2 R_t)^{2/3})$  for solving the $t$th subproblem, where $R_t = \| \vx_t- \mP_{\eta(\mF+\gN_\gX)}(\vx_t) \| = \eta {\rm res}(\vx_t)$ is the distance between the initialization $\vx_t$ and the proximal point \(\mP_{\eta(\mF+\gN_\gX)}(\vx_t)\). 
    \item Recall that the guarantee of the Halpern iteration immediately give \(R_t = \gO(t^{-1})\). Therefore, we can select a very large stepsize of $\eta = \gO(T / L_2)$, while ensuring the total costs remains nearly unchanged since $ \sum_{t=0}^{T-1} N_t = \tilde \gO( T)$. Finally, substituting the setting of $\eta$ into  ${\rm res}(\vx_T) = \gO((\eta T)^{-1})$ leads to the claimed convergence rate of $\tilde \gO( L_2 / T^2)$. 
\end{enumerate}

The anchor point $\vx_0$ in the Halpern iteration is indispensable for our new results. Without the anchor point, \textit{i.e.}, setting $\beta_t = 0$ in \eqref{eq:halpern}, the Halpern iteration reduces to the proximal point iteration \citep{rockafellar1976monotone}. Although the latter also converges at the rate of $\gO((\eta T)^{-1})$ in the gap function \citep{mokhtari2020unified}, we can only prove $\sum_{t=0}^{T-1} R_t^2 = \gO(1)$, thus the total cost of subproblem solving is $ \eta L_2 \sum_{t=0}^{T-1} R_t = \gO(\eta L_2 \sqrt{T})$. Hence, one can only choose $\eta = \Theta(\sqrt{T}/L_2)$ and achieve the classical rate of $\gO(L_2/T^{1.5})$ as prior works \citep{monteiro2012iteration,jiang2024adaptive,alves2023search}. 

\paragraph{For the case $p \ge 2$.} To generalize the above result to higher-order optimization, we can easily follow the same analysis and know that, as long as there exists a basic tensor method that has the complexity of $(\eta L_p d_t)^{1/(p-1)}$ for solving the proximal subproblem, then the large-step Halpern iteration with $\eta = \Theta(T^{p-1} / L_p)$ yields a method converges at the rate of $\tilde \gO(T^{-p})$. 

However, the high-order generalization of NPE \citep{bullins2022higher,adil2022optimal,huang2022approximation,lin2022perseus} can only achieve a complexity of $\tilde \gO(\eta L_p d_t)^{2/(p+1)})$ that is not fast enough for $p \ge 4$. To address this issue, we propose an Anchored Tensor Method (ATM), which iteratively adds regularization on the original operator to ensure that every tensor step lies in the local superlinear convergence region. We show that ATM achieves a rate of $\gO(T^{-(p-1)})$ for $p \ge 2$, which implies the required complexity of $(\eta L_p d_t)^{1/(p-1)}$ for the proximal subproblem in the Halpern iteration. Finally, the algorithm called Halpern-ATM achieves the fast convergence rate of $\tilde \gO(T^{-p})$.

\subsection{Related Works}

\paragraph{Convex optimization.} When the operator $\mF(\vx) = \nabla f(\vx)$ is the gradient of a convex function $f$, \citet{nesterov2006cubic} proposed the cubic regularized Newton (CRN) method, which is the first globally convergent second-order method and achieves a rate of $\gO(L_2/T^2)$ for $p=2$.
\citet{nesterov2008accelerating} proposed the accelerated CRN to achieve a fast rate of $\gO(L_2/T^3)$. \citet{monteiro2013accelerated} proposed the accelerated Newton proximal extragradient (A-NPE) method that converges at an even faster rate of $\tilde \gO( L_2/ T^{3.5})$. For $p \ge 2$, \citet{gasnikov2019optimal,bubeck2019near,jiang2021optimal} proposed $p$th-order generalization of A-NPE that converges at the rate of $\tilde \gO(L_p / T^{(3p+1)/2})$. Very recently, \citet{kovalev2022first,carmon2022optimal} removed the bisection sub-procedure in A-NPE and achieved the optimal rate of $\gO(L_p / T^{(3p+1)/2})$.

On the lower bound part, \citet{agarwal2018lower} showed a lower bound of $\Omega(L_p/T^{(5p+1)/2})$ for randomized algorithms. Concurrently, \citet{arjevani2019oracle} showed the optimal lower bound of $\Omega( L_p/T^{(3p+1)/2})$ for deterministic algorithms. Recently, \citet{garg2021near} improved the lower bound of randomized and quantum algorithms to $\tilde \Omega( L_p/T^{(3p+1)/2})$, which has no gap between the upper bounds up to logarithmic factors.

\paragraph{Monotone variational inequalities.} For a general operator $\mF$, \citet{monteiro2012iteration} proposed the Newton proximal extragradient (NPE) method that globally converges at the rate of $\gO(L_2/T^{1.5})$ for $p=2$, using $T$ second-order oracle calls and $\gO(T \log T)$ matrix inversion operations. \citet{bullins2022higher} generalized NPE to $p$th order and showed a convergence rate of $\gO(L_p/T^{(p+1)/2})$. Subsequently, many simpler analyses or alternative algorithms have been found \citep{huang2022approximation,adil2022optimal,lin2023monotone,jiang2022generalized,nesterov2023high,lin2022perseus}, but all the shown convergence rates are $\gO(L_p/T^{(p+1)/2})$. Moreover, \citet{jiang2024adaptive,alves2023search} proposed bisection-free methods for $p=2$ that also converge at the rate of $\gO(L_2/T^{1.5})$ and only require a single matrix inversion at each iteration.

When $\mF$ is the gradient operator of a convex-concave function $\phi: \gX \rightarrow \sR$, \citet{chen2025solving} applied a primal-dual Monteiro-Svaiter acceleration \citep{monteiro2013accelerated} to the proximal function and achieved a fast rate of $\gO(L_2/T^{1.75})$ for second-order minimax optimization ($p=2$). In the full version, \citet{chen2026solving} showed the $p$th-order generalization achieves the rate of $\gO(L_p/ T^{(3p+1)/4})$ and also established a lower bound of $\Omega(L_p/T^{(3p-1)/2})$. This paper reduces the gap by proposing better upper bounds of $\gO(L_p / T^p)$.

\section{Preliminaries}

\subsection{Main Assumptions}

We study the MVI Problem (\ref{eq:prob-mvi}) under the following standard assumptions \citep{huang2022approximation,lin2022perseus,bullins2022higher,jiang2022generalized}.

\begin{asm} \label{asm:X}
$\gX \subseteq\mathbb{R}^{d}$ is a nonempty compact convex set.
\end{asm}

\begin{asm} \label{asm:x-star}
 We assume there exists $\vx^{\star},$ such that $ 0 \in (\mF + \gN_\gX)(\vx^{\star})$.
\end{asm}

\begin{asm} \label{asm:F}
    \(\mF:\gX \to\mathbb{R}^{d}\) is continuous and monotone:
\[
\langle \mF(\vx)-\mF(\vy),\vx-\vy\rangle\geq0,\quad \forall \vx,\vy\in \gX.
\]
\end{asm}
\begin{asm} \label{asm:pth-smooth}
Assume $F: \gX \to \sR^d$ is $p$th-order $L_p$ smooth:
\begin{equation} \label{eq:F-pth-smooth}
    \left\|D^{p-1}\mF(\vx)-D^{p-1}\mF(\vy)\right\|\leq L_{p}\left\|\vx-\vy\right\|, \quad \forall \vx,\vy \in \gX.
\end{equation}
\end{asm}


\subsection{Tensor Steps}

A basic operation to leverage the $p$th-order oracle in \eqref{eq:pth-order-oracle} is the following tensor step \citep{huang2022approximation,lin2022perseus,bullins2022higher,jiang2022generalized}, which solves the MVI problem/monotone inclusion induced by a local Taylor approximation.

\begin{dfn} \label{dfn:mvi-tensor-step}
Under Assumption \ref{asm:F} and \ref{asm:pth-smooth}, for an input point $\vx \in \gX$ the $p$th-order tensor step with regularization parameter $M\ge L_p$ outputs $\vy = \gT_\mF^p(\vx;M)$ such that
\begin{equation} \label{eq:first-order-tensor}
    \vzero \in \bar \mF_\vx^p(\vy)+\frac{M}{p!}\left\|\vy-\vx\right\|^{p-1}(\vy-\vx)+\gN_{\gX}(\vy),
\end{equation}
where \(\bar \mF_{\vx}^p(\vy):=\sum_{k=0}^{p-1}D^{k}\mF(\vx)[\vy-\vx]^{k} / k!\) is $(p-1)$th-order Taylor expansion for $\mF$ at the center point $\vx$.
\end{dfn}

When $p=1$, the above tensor step is exactly the (projected) gradient step that can be conducted in vector addition operators; When $p=2$, it can be solved in the same spirit of cubic regularized Newton subproblem \citep{nesterov2006cubic} using a similar binary search, whose costs is nearly the same as matrix multiplication/inversion time \citep{duan2023faster,demmel2007fast}; In general ($p \ge 2$), the MVI problem in \eqref{eq:first-order-tensor} can be solved in polynomial time using the interior point method \citep{ralph2000superlinear,qi2002smoothing} or the cutting plane method \citep{jiang2020improved}. 

\subsection{Resolvent Operators and Residuals}

To apply the Halpern iteration to MVI problems, we make use of the following 
resolvent/proximal operator \citep{rockafellar1976monotone,combettes2018monotone,facchinei2003finite}. 

\begin{dfn} \label{dfn:resolvent}
Under Assumption \ref{asm:F}, for $\eta>0$, we can define the unique-valued resolvent operator as
\[
\mP_\eta(\vx) = ({\rm Id} + \eta (\mF + \gN_\gX ))^{-1} (\vx).
\]
\end{dfn}
It is well-known that $\mP_\eta$ is non-expansive  if $\mF$ is monotone \citep{ryu2016primer,rockafellar1976monotone}.

\begin{prop}[{\citet[Section 6]{ryu2016primer}}]\label{prop:resolvent-geom}
Under Assumption \ref{asm:F}, the operator $\mP_\eta$ is non-expansive for any $\eta>0$, 
\[
\| \mP_{\eta}(\vx)-\mP_{\eta}(\vx') \| \le \| \vx - \vx' \|, \quad \forall \vx, \vx' \in \gX.
\]
\end{prop}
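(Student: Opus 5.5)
The plan is to prove that the resolvent is firmly non-expansive, which is stronger than non-expansiveness; the claim then follows from Cauchy--Schwarz. Fix $\vx,\vx'$ and set $\vy=\mP_\eta(\vx)$ and $\vy'=\mP_\eta(\vx')$. Both are well defined by Definition \ref{dfn:resolvent}, which we take as given.

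The first step is to rewrite the definition of the resolvent as an inclusion:
\[
\vx-\vy\in\eta\bigl(\mF(\vy)+\gN_\gX(\vy)\bigr).
\]
So there are normal vectors $\vu\in\gN_\gX(\vy)$ and $\vu'\in\gN_\gX(\vy')$ with
\[
\vx-\vy=\eta(\mF(\vy)+\vu), \qquad \vx'-\vy'=\eta(\mF(\vy')+\vu').
\]
Note that $\vy,\vy'\in\gX$, since the normal cone is empty outside $\gX$.

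The second step is to establish monotonicity of $\mF+\gN_\gX$ on $\gX$. The term $\langle\mF(\vy)-\mF(\vy'),\vy-\vy'\rangle\ge0$ by Assumption \ref{asm:F}. For the normal cone, the definition gives $\langle\vu,\vz-\vy\rangle\le0$ for all $\vz\in\gX$. Taking $\vz=\vy'$ gives $\langle\vu,\vy-\vy'\rangle\ge0$, and symmetrically $\langle\vu',\vy'-\vy\rangle\ge0$. Adding these yields $\langle\vu-\vu',\vy-\vy'\rangle\ge0$. Subtracting the two inclusions and pairing with $\vy-\vy'$ then gives
\[
\langle(\vx-\vx')-(\vy-\vy'),\,\vy-\vy'\rangle\ge0,
\]
that is, $\|\vy-\vy'\|^2\le\langle\vx-\vx',\vy-\vy'\rangle$.

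The final step applies Cauchy--Schwarz to bound the right-hand side by $\|\vx-\vx'\|\,\|\vy-\vy'\|$. If $\vy=\vy'$ the claim is trivial; otherwise dividing by $\|\vy-\vy'\|$ gives $\|\vy-\vy'\|\le\|\vx-\vx'\|$, which is the statement.

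The only delicate point is that the resolvent must be single-valued and defined at $\vx,\vx'$, which Definition \ref{dfn:resolvent} asserts. Single-valuedness also follows from the same inequality applied with $\vx=\vx'$: it forces $\vy=\vy'$. Existence comes from Minty's theorem, or directly from compactness of $\gX$ together with continuity of $\mF$, which guarantee that the strongly monotone VI defining $\vy$ has a solution. I would cite this rather than reprove it.
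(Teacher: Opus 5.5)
Your proof is correct and is essentially the standard argument: the paper gives no proof of its own and simply cites Ryu and Boyd, whose argument rests on the same ingredient you use, namely monotonicity of $\mF+\gN_\gX$ applied to the two inclusions defining $\mP_\eta(\vx)$ and $\mP_\eta(\vx')$. Keep your explicit check that $\vy,\vy'\in\gX$ before invoking Assumption~\ref{asm:F}, since that assumption only gives monotonicity on $\gX$.
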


Moreover, it is easy to see that $\vx^*$ solved the MVI problem, \textit{i.e.}, $\vzero \in \mF(\vx^*) + \gN_\gX(\vx^*)$, if and only if $\vx^*$ is the fixed point of $\mP_\eta$, \textit{i.e.}, $\vx^* = \mP_\eta(\vx^*)$. It motives the definition of the following proximal residual \citep{diakonikolas2020halpern,cai2024variance,alacaoglu2024revisiting}.

\begin{dfn} \label{dfn:proximal-residual}
Under Assumption \ref{asm:F}, we define the proximal residual for a point $\vx \in \gX$ as
\[
{\rm res}(\vx) := \| \vx - \mP_\eta(\vx) \| / \eta.
\]
\end{dfn}
In addition, finding a point with a small proximal residual is sufficient for approximating the MVI problem:

\begin{prop}[{\citet[Proposition 4.1]{cai2024variance}}]
Under Assumption \ref{asm:F},  then for $\vy = \mP_\eta(\vx)$, 
\begin{equation} \label{eq:dist-small}
    {\rm dist}(\vzero, \mF(\vy) + \gN_\gX(\vy)) \le {\rm res}(\vx).
\end{equation}
\end{prop}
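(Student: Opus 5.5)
The plan is to read off the inclusion that defines $\vy = \mP_\eta(\vx)$ and convert it into an explicit element of $\mF(\vy) + \gN_\gX(\vy)$ whose norm is exactly ${\rm res}(\vx)$. By Definition~\ref{dfn:resolvent}, $\vy = ({\rm Id} + \eta(\mF + \gN_\gX))^{-1}(\vx)$. Since $\gN_\gX$ is a cone, this is equivalent to
\[
\vx \in \vy + \eta \mF(\vy) + \eta\, \gN_\gX(\vy),
\]
which rearranges to
\[
\frac{\vx - \vy}{\eta} \in \mF(\vy) + \gN_\gX(\vy).
\]

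The distance is an infimum over the set $\mF(\vy) + \gN_\gX(\vy)$, so it is bounded by the norm of any particular element of that set. Taking the element $(\vx - \vy)/\eta$ gives
\[
{\rm dist}(\vzero, \mF(\vy) + \gN_\gX(\vy)) \le \frac{\|\vx - \vy\|}{\eta} = \frac{\|\vx - \mP_\eta(\vx)\|}{\eta} = {\rm res}(\vx),
\]
where the last equality is Definition~\ref{dfn:proximal-residual}.

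The argument uses only that $\mP_\eta(\vx)$ is a well-defined single point. This is supplied by Definition~\ref{dfn:resolvent} under Assumption~\ref{asm:F}: maximal monotonicity of $\mF + \gN_\gX$, which holds for a continuous monotone $\mF$ on a closed convex set, ensures the resolvent is single-valued with full domain. No estimate is needed beyond unpacking definitions. The one point to check carefully is the scaling of the normal cone: $\eta\, \gN_\gX(\vy) = \gN_\gX(\vy)$ because normal cones are invariant under positive scaling, and this invariance is what places the residual vector in $\mF(\vy) + \gN_\gX(\vy)$ itself rather than in a rescaled copy of it.
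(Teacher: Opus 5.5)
Your proof is correct and matches the standard argument behind the cited result. The paper gives no proof of its own; the argument is just the resolvent optimality condition \eqref{eq:optimality-resolvent} rearranged to $(\vx-\vy)/\eta \in \mF(\vy)+\gN_\gX(\vy)$, and the distance is then bounded by the norm of this element. Your cone-invariance remark is harmless but not needed: in this definition $\eta$ multiplies the whole sum $\mF+\gN_\gX$, so dividing $\vx-\vy \in \eta\,(\mF(\vy)+\gN_\gX(\vy))$ by $\eta$ already gives the membership, with no property of cones used.
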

The left-hand side in \eqref{eq:dist-small} is also called the tangent residual~\citep{cai2022finite,cai2023accelerated,cai2024accelerated} at the point $\vy$, which implies a strong solution/Stampacchia variational inequality solution \citep{hartman1966some}. A strong solution also implies a weak solution/Minty variational inequality solution \citep{minty1962monotone} measured by the restricted gap function \citep{nesterov2007dual} ${\rm gap}(\vy):= \sup_{\vy' \in \gX} \langle \mF(\vy), \vy - \vy' \rangle$   on a compact set, because we have ${\rm gap}(\vy) \le {\rm dist}(\vzero, \mF(\vy) + \gN_\gX(\vy)) \cdot {\rm diam}(D)$ by the monotonicity of $\mF$ and Cauchy–Schwarz. Finally, although the right-hand side in \eqref{eq:dist-small} depends on the proximal point $\vy = \mP_\eta(\vx)$, it is straightforward to produce $\hat \vy \in \gX$ such that \({\rm dist}(\vzero, \mF(\hat \vy) + \gN_\gX(\hat \vy)) \le 2 {\rm res}(\vx) \) by approximately solving the proximal subproblem. See, \textit{e.g.} \citet[Lemma C.4]{cai2024variance} for the case $p=1$, and for general $p \ge 2$ is similar.




\section{Main Result}

The main theorem below shows a new complexity upper bound that of $T = \tilde \gO(\epsilon^{-1/p})$ for finding an $\eps$-solution such that ${\rm res}(\vx) \le \eps$, which is equivalent to the convergence rate ${\rm res}(\vx) =  \tilde \gO( T^{-p} )$ as claimed.
 

\begin{thm}\label{thm:main}
Under Assumptions \ref{asm:X}-\ref{asm:pth-smooth}, for any integer $p \ge 2$, there exists an algorithm (Algorithm \ref{alg:inexact-halpern}) that can return an $\epsilon$-solution $\vx \in \gX$ satisfying ${\rm res}(\vx) \le \eps$ in the $p$th-order oracle complexity of 
\[
T = {\widetilde{\mathcal{O}}\Bigg(D \left(  \frac{L_{p}}{\epsilon}\right)^{1/p}\Bigg),}
\]
where $D = \left\|\vx_{0}-\vx^*\right\|$ is the distance of the initial point $\vx_0 \in \gX$ to the optimal solution $\vx^*$.
\end{thm}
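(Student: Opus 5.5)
The plan is to follow the two-level scheme from Section~\ref{subsec:tech}. The outer loop is an \emph{inexact} Halpern iteration on the resolvent $\mP_\eta$ with anchoring weights $\beta_t = 1/(t+2)$ and a large stepsize $\eta$. The inner loop approximately evaluates $\mP_\eta(\vx_t)$ with the Anchored Tensor Method (ATM).

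\textbf{Step 1 (outer loop).} First I would prove an inexact-Halpern lemma. Suppose $\vx_{t+1} = \beta_t \vx_0 + (1-\beta_t)\tilde\vy_t$ with $\|\tilde\vy_t - \mP_\eta(\vx_t)\| \le \delta_t$. Using the standard potential argument for Halpern iterations, only the non-expansiveness from Proposition~\ref{prop:resolvent-geom} is needed, and it should give two bounds:
\[
\|\vx_t - \mP_\eta(\vx_t)\| \le \gO\Big(\tfrac{D + \sum_{s<t}(s+1)\delta_s}{t+1}\Big), \qquad \|\vx_t - \vx^*\| \le D + \textstyle\sum_{s<t}\delta_s .
\]
Choosing $\delta_t \le D/(T(t+1))$, or a similar summable schedule, keeps both bounds at $\gO(D/(t+1))$ and $\gO(D)$ respectively. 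The projection onto $\gX$ keeps all iterates feasible, since $\vx_0\in\gX$ and $\tilde\vy_t\in\gX$. Taking $T$ outer steps then gives ${\rm res}(\vx_T) = \gO(D/(\eta T))$.

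\textbf{Step 2 (inner solver).} The $t$th subproblem is the strongly monotone inclusion $\vzero \in \mF(\vy) + \eta^{-1}(\vy - \vx_t) + \gN_\gX(\vy)$. The operator $\mF + \eta^{-1}({\rm Id}-\vx_t)$ is $\eta^{-1}$-strongly monotone and still $p$th-order $L_p$-smooth, because the linear term does not change $D^{p-1}$ for $p\ge2$. I would then show that ATM, which repeatedly applies tensor steps $\gT^p(\cdot;M)$ from Definition~\ref{dfn:mvi-tensor-step} to an anchored and regularized operator, has a sublinear rate $\gO(L_p R^{p}/N^{p-1})$ on the monotone problem. Combined with strong monotonicity through restarting, this gives distance reduction from $R_t$ to $\delta_t$ in
\[
N_t = \gO_p\big( (\eta L_p R_t^{p-1})^{1/(p-1)} + \log(R_t/\delta_t)\big)
\]
steps, where $R_t = \|\vx_t - \mP_\eta(\vx_t)\|$. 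Proving the ATM rate is the main obstacle. The approach I would try first mirrors Step~1 inside the subproblem: anchor to the subproblem's initial point with a vanishing regularization $\lambda_k(\vy-\vy_0)$. Each tensor step then falls in the region where the Taylor error $\frac{L_p}{p!}\|\vy-\vx\|^p$ is dominated by the regularization, which gives contraction per step. The potential argument needs to be tuned so that the step sizes scale as $k^{-1}$ in the right power.

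\textbf{Step 3 (summation).} By Step~1, $R_t = \gO(D/(t+1))$, so
\[
\sum_{t<T} N_t \lesssim (\eta L_p)^{1/(p-1)} D \sum_t (t+1)^{-1} + T\log(T) = \tilde\gO\big((\eta L_p)^{1/(p-1)} D + T\big).
\]
Now set $\eta = T^{p-1}/(L_p D^{p-1})$. The total oracle count is then $\tilde\gO(T)$, and ${\rm res}(\vx_T) = \gO(D/(\eta T)) = \gO(L_p D^{p}/T^{p})$. Solving $L_pD^p/T^p \le \eps$ gives $T = \tilde\gO(D(L_p/\eps)^{1/p})$.

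A final detail is that ${\rm res}$ uses the same $\eta$ in its definition. The bound is stated for that $\eta$, and this is consistent with Theorem~\ref{thm:main}.
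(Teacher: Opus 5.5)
Your outer loop and final summation are the paper's. Your Step~1 is a valid, more self-contained substitute for the cited relative-error lemma (Lemma~\ref{lem:inexact-halpern}). Couple $\vx_t$ with the exact Halpern sequence $\hat\vx_t$ from the same anchor; non-expansiveness gives $(t+1)\|\vx_t-\hat\vx_t\|\le\sum_{s<t}(s+1)\delta_s$. Together with the exact bound $\|\hat\vx_t-\mP_\eta(\hat\vx_t)\|\le 2D/(t+1)$, this yields your estimate. The absolute schedule $\delta_t=D/(T(t+1))$ then keeps $R_t\le 4D/(t+1)$, at the harmless cost of $\log(R_t/\delta_t)=\gO(\log T)$ per subproblem.

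The genuine gap is Step~2, which is exactly the new ingredient needed for $p\ge 4$; for $p\le 3$, existing high-order NPE bounds already suffice. You assert $N_t=\gO((\eta L_p)^{1/(p-1)}R_t+\log(R_t/\delta_t))$ but only sketch a mechanism, and the sketch omits what makes it work.

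Per-step contraction toward the solution $\vy_k^*$ of the \emph{current} anchored problem is available. By Lemma~\ref{lem:local-perseus}, if the anchored operator is $\mu_k$-strongly monotone and $\|\vy_k-\vy_k^*\|\le\rho(\mu_k)\propto(\mu_k/L_p)^{1/(p-1)}$, one tensor step lands within $\theta_p\rho(\mu_k)$ of $\vy_k^*$. On its own this is useless, because lowering the regularization moves the target. The paper controls this drift with two bounds:
\begin{enumerate}
\item the localization bound $\|\vy^*_\nu-\vy_0\|\le\|\vy_0-\vy^*\|$ (Lemma~\ref{lem:localization});
\item the stability bound $\|\vy^*_{k+1}-\vy^*_k\|\le(1-\mu_{k+1}/\mu_k)\|\vy_0-\vy^*\|$ (Lemma~\ref{lem:sensitivity}), obtained by pairing the two optimality conditions through monotonicity. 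For the strongly monotone subproblem, use the $\eta^{-1}$-strong monotonicity of $\mG$ so that the denominator is the full modulus $\mu_k$.
\end{enumerate}
It then fixes the schedule $1/r_k=1/R+k(1-\theta_p)/(2R(p-1))$, $\mu_k=\rho^{-1}(r_k)$. With this choice the drift is at most $(1-\theta_p)r_k/2$ while $r_{k+1}\ge(1+\theta_p)r_k/2$. The path-following induction $\|\vy_k-\vy_k^*\|\le r_k$ therefore closes, and no potential function is needed. Since $1/r_k$ grows linearly, $\mu_k$ reaches $\eta^{-1}$ after $\gO(R_t(\eta L_p)^{1/(p-1)})$ steps. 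After that, $\gO(\log\log(R_t/\delta_t))$ plain tensor steps on $\mG_t$ finish.

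Your plan, a ``potential argument tuned so that the step sizes scale as $k^{-1}$,'' specifies neither the drift bound nor this schedule. So the claimed inner complexity, and with it the theorem for $p\ge 4$, is not established.

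Your detour through a sublinear monotone rate plus restarts is also unnecessary. It would need an extra argument converting distance-to-the-anchored-path into a residual bound at the iterate. Since each subproblem is already $\eta^{-1}$-strongly monotone, you can simply stop annealing the anchor weight at $\eta^{-1}$.
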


For $p=2$, we obtain a Newton method for MVI problems that has the complexity of $\tilde \gO(\eps^{-1/2})$, improving the classical result of $\gO(\eps^{-2/3})$ for MVI \citep{monteiro2012iteration} and the recent result of $\gO(\eps^{-4/7})$ for minimax problems \citep{chen2025solving}. Compared to the $\Omega(\eps^{-2/5})$ lower bound proved in \citep{chen2026solving}, a gap of $\epsilon^{-1/10}$ persists. Therefore, the oracle complexity remains open even for $p=2$. For the more general setting $p\ge 2$, our new upper bounds also improves the $\gO(\eps^{-2/(p+1)})$ for MVI \citep{monteiro2012iteration} and the $\gO(\eps^{-4/(3p+1)})$ for minimax problems \citep{chen2026solving}. Compared to the lower bound of $\Omega( \eps^{-2/(3p-1)})$, the gap becomes larger as $p$ grows.

\section{Halpern-NPE Achieves the $\tilde \gO(T^{-2})$ Rate for $p=2$}

In this section, we first prove Theorem \ref{thm:main} for $p=2$ by introducing a simple second-order method that achieves the fast convergence rate of $\tilde \gO(T^{-2})$.
Following the ideas in Section \ref{subsec:tech}, we apply a large-step inexact Halpern iteration on the resolvent operator, which is solved by a restarted version \citep{huang2022approximation,lin2022perseus} of the NPE method \citep{monteiro2012iteration}. We formally introduce our double-loop Halpern-NPE method algorithm in the following.

\subsection{Outer Loop: Inexact Halpern Iteration} \label{subsec:inext-halpern}

In \eqref{eq:halpern}, a simple choice of the anchoring coefficient $\beta_t$ for optimal convergence rate is $\beta_t = 1/(t+2)$ \citep{lieder2020convergence,diakonikolas2020halpern}. Then, we obtain the following scheme by applying the Halpern iteration $\beta_t = 1/(t+2)$ on the approximate resolvent $\vy_t \approx \mP_\eta(\vx_t)$:
\begin{equation} \label{eq:inexact-halpern}
\vx_{t+1} = \frac{1}{t+2} \vx_0 + \frac{t+1}{t+2} \vy_t, \qquad {\rm where} \quad \vy_t \approx \mP_\eta(\vx_t).
\end{equation}
This inexact Halpern iteration has been analyzed in \citep{diakonikolas2020halpern,alacaoglu2024revisiting,cai2024variance}. This iteration converges at the rate of ${\rm res}(\vx_T) = \gO( (\eta T)^{-1})$ if the approximation error of the resolvent is sufficiently small at every step 

\begin{lem}[{\citet[Theorem 2.1]{alacaoglu2024revisiting}}] \label{lem:inexact-halpern}
Under Assumptions \ref{asm:X}-\ref{asm:F}, if the resolvent approximation error satisifes 
\begin{equation} \label{eq:cond-delta-t}
   \|\vy_t - \mP_\eta(\vx_t) \| \le \delta_t:= \frac{\| \vx_t - \mP_\eta(\vx_t)\|}{98 \sqrt{t+2} \log (t+2) }, \qquad  t =0,\cdots,T-1,
\end{equation}
then the inexact Halpern iteration in \eqref{eq:inexact-halpern} guarantees a convergence rate of
\begin{equation} \label{eq:halpern-rate}
    {\rm res}(\vx_T) \le  \frac{4 \|\vx_0 - \vx^* \|}{ \eta (T+1)}.
\end{equation}
\end{lem}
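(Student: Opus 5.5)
This is an inexact Halpern iteration for the non-expansive map $\mP_\eta$. I would run the standard exact Halpern analysis with an error term and show that the summed errors stay bounded.

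Write $\mP=\mP_\eta$, $\vu_t=\mP(\vx_t)$, and $\ve_t=\vy_t-\vu_t$, so that $\vx_{t+1}=\beta_t\vx_0+(1-\beta_t)(\vu_t+\ve_t)$ with $\beta_t=1/(t+2)$.

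\textbf{Step 1 (boundedness).} Since $\vx^*=\mP(\vx^*)$ and $\mP$ is non-expansive (Proposition~\ref{prop:resolvent-geom}), we get
\[
\|\vx_{t+1}-\vx^*\|\le \beta_t\|\vx_0-\vx^*\|+(1-\beta_t)\bigl(\|\vx_t-\vx^*\|+\delta_t\bigr).
\]
Also $\|\vx_t-\mP(\vx_t)\|\le 2\|\vx_t-\vx^*\|$. Hence $\delta_t\le 2\|\vx_t-\vx^*\|/(98\sqrt{t+2}\log(t+2))$, and the recursion gives $\|\vx_t-\vx^*\|\le \prod_s(1+c_s)\,D\le 2D$. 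Here $c_s=1/(49\sqrt{s+2}\log(s+2))$.

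This product is the delicate point: $\sum_s c_s$ diverges like $\sqrt{t}/\log t$, so the crude product bound fails. I would therefore first aim to bound the errors in terms of the residual, which decays like $1/t$. Then $\delta_t\lesssim D/(t^{3/2}\log t)$, which is summable, and the constant $98$ is chosen to absorb it. So boundedness and the residual bound have to be proved jointly, by induction on $t$.

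\textbf{Step 2 (potential for the residual).} I would use the Lyapunov function from the exact Halpern analysis (Diakonikolas; Yoon–Ryu style):
\[
V_t=t(t+1)\|\vx_t-\mP(\vx_t)\|^2+2(t+1)\langle \vx_t-\mP(\vx_t),\vx_t-\vx_0\rangle.
\]
In the exact case, the firm non-expansiveness of resolvents, namely
\[
\|\mP\vx-\mP\vx'\|^2\le\langle \mP\vx-\mP\vx',\vx-\vx'\rangle,
\]
gives $V_{t+1}\le V_t$. This is a stronger property than Proposition~\ref{prop:resolvent-geom}, but it is standard for resolvents of monotone operators. In the inexact case, expanding $V_{t+1}$ with $\vx_{t+1}-\vx_t$ containing $(1-\beta_t)\ve_t$ yields
\[
V_{t+1}\le V_t+C(t+1)\|\ve_t\|\bigl((t+1)\|\vx_{t+1}-\mP\vx_{t+1}\|+\|\vx_t-\vx_0\|\bigr).
\]
Lower-bounding $V_T$ using Cauchy–Schwarz and $\|\vx_T-\vx_0\|\le 3D$ gives
\[
T(T+1)\, r_T^2\lesssim T D\, r_T+\textstyle\sum_t(\text{error terms}),
\]
where $r_t=\|\vx_t-\mP\vx_t\|$.

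\textbf{Step 3 (induction).} The hypothesis is $r_s\le 4D/(s+1)$ for all $s\le t$. The condition $\|\ve_s\|\le \delta_s=r_s/(98\sqrt{s+2}\log(s+2))$ then makes each error term at most $(s+1)^2\cdot r_s\cdot \delta_s\lesssim D^2/(\sqrt{s}\log s)$. I expect this step to be the main obstacle: the sum of these terms grows like $\sqrt{t}/\log t$, which beats the $t$ scale only marginally. I would first try the weighted version and compare with the leading $T^2 r_T^2$ term at the appropriate scale. If that is too lossy, I would instead use the coupled bound $r_{t+1}$ versus $r_t$ and the non-expansive contraction $r_{t+1}\le(1-\beta_t)r_t+O(\beta_t D)+2\delta_t$, closing with the $\log$ factor. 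Once the induction closes, dividing $r_T\le 4D/(T+1)$ by $\eta$ gives \eqref{eq:halpern-rate}.
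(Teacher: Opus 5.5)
The paper does not prove this lemma; it quotes it from \citet[Theorem 2.1]{alacaoglu2024revisiting}. Your argument therefore has to stand on its own, and as written it has a genuine gap in the error bookkeeping of Steps 2--3.

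\textbf{What is right, and where it fails.} Your $V_t$ is the correct potential for Halpern with $\beta_t=1/(t+2)$ on a firmly non-expansive map: $V_0=0$, and $V_{t+1}\le V_t$ in the exact case. A joint induction on $r_s\le 4D/(s+1)$ is also the right way to organize the proof. The problem is that you bound the error contribution in norm. That costs about $(s+1)^2 r_{s+1}\delta_s\asymp D^2/(\sqrt{s}\log s)$ per step, so you only get $V_T\lesssim D^2\sqrt{T}/\log T$. At the target rate both terms of $V_T$, namely $T(T+1)r_T^2$ and $TDr_T$, are $\Theta(D^2)$. So the accumulated error must stay bounded: the relevant scale is $D^2$, not $t$. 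What your bound actually yields is $r_T\lesssim D\,T^{-3/4}(\log T)^{-1/2}$, and the induction does not close.

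Your fallback is weaker still. Multiplied by $t+2$, the recursion $r_{t+1}\le(1-\beta_t)r_t+O(\beta_t D)+2\delta_t$ reads $(t+2)r_{t+1}\le (t+1)r_t+O(D)+2(t+2)\delta_t$, which only gives $r_t=O(D)$. The $1/t$ rate comes from the cancellation encoded in the potential, not from a one-step contraction.

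\textbf{The missing idea.} Keep, rather than discard, the slack that firm non-expansiveness leaves in the exact computation. Write $g_t=\vx_t-\mP(\vx_t)$ and $e_t=\vy_t-\mP(\vx_t)$. Apply $\langle g(\vx)-g(\vx'),\vx-\vx'\rangle\ge\|g(\vx)-g(\vx')\|^2$ to the pair $(\vx_{t+1},\vx_t)$. Expand $\vx_{t+1}-\vx_t$ once as $-\frac{1}{t+1}(\vx_{t+1}-\vx_0)-g_t+e_t$ and once as $-\frac{1}{t+2}(\vx_t-\vx_0)-\frac{t+1}{t+2}(g_t-e_t)$. This gives exactly
\[
V_{t+1}+(t+1)(t+2)\|g_{t+1}-g_t\|^2\le V_t+2(t+1)(t+2)\langle g_{t+1}-g_t,e_t\rangle+2(t+1)\langle g_t,e_t\rangle .
\]
The dangerous cross term, with weight $(t+1)^2$, pairs $e_t$ with $g_{t+1}-g_t$. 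Young's inequality absorbs it into the slack at the cost of $(t+1)(t+2)\|e_t\|^2$, leaving
\[
V_{t+1}\le V_t+(t+1)(t+2)\|e_t\|^2+2(t+1)r_t\|e_t\|.
\]
Under the hypothesis $r_t\le 4D/(t+1)$ and $\|e_t\|\le\delta_t$, this increment is at most
\[
\frac{16D^2}{t+1}\left(\frac{1}{98^2\log^2(t+2)}+\frac{2}{98\sqrt{t+2}\,\log(t+2)}\right).
\]
Both pieces are summable, and the first is exactly why the factor $\sqrt{t+2}\log(t+2)$ is needed: $\sum_t 1/((t+1)\log^2(t+2))<\infty$. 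The total is $cD^2$ with $c<1$.

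\textbf{Closing the induction.} Firm non-expansiveness also gives $\langle g_T,\vx_T-\vx^*\rangle\ge r_T^2$, so $V_T\ge (T+1)(T+2)r_T^2-2(T+1)Dr_T$. Combined with $V_T\le cD^2$, this yields $r_T\le (1+\sqrt{1+c})D/(T+1)\le 4D/(T+1)$. Your Step 1 then becomes unnecessary.

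Firm non-expansiveness is essential here, not merely convenient. For a merely non-expansive map the $\langle g_{t+1},g_t\rangle$ terms cancel exactly, and no slack square remains to absorb the error.
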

The above result has been extensively used in first-order methods, including parameter-free methods \citep{diakonikolas2020halpern}, finite-sum problems \citep{cai2024variance}, and structured non-monotone problems \citep{alacaoglu2024revisiting}. In the high-order settings considered in this paper, we show that it allows a larger stepsize of $\eta = \gO(T / L_2)$ than the choice of $\eta = \gO(\sqrt{T}/ L_2)$ in the proximal point method \citep{monteiro2012iteration,bullins2022higher,lin2022perseus,huang2022approximation}. 

\subsection{Inner Loop: Approximating the Resolvent} \label{subsec:approx-resolvent-p2}
Recall that the resolvent in Definition \ref{dfn:resolvent} is \(\mP_\eta(\vx) = ({\rm Id} + \eta (\mF + \gN_\gX ))^{-1} (\vx)\). Equivalently, the proximal point $\vy=\mP_\eta(\vx)$ is the unique solution of
\begin{equation} \label{eq:optimality-resolvent}
    \vzero \in \mF(\vy)+ \frac{1}{\eta} (\vy-\vx)+\gN_{\gX}(\vy).
\end{equation}
For a fixed $\vx$, the above problem is equivalent to solving the MVI problem induced by the regularized operator $\mG(\vy) := \mF(\vy) + (\vy  -\vx) / \eta$, which is $\mu $-strongly monotone for $\mu = 1/\eta$:
\begin{equation} \label{eq:regularized-strongly-monotone}
    \langle \mG(\vy) -  \mG(\vy'), \vy - \vy' \rangle \ge \mu \|\vy - \vy' \|^2, \qquad \forall \vy,\vy' \in \gX.
\end{equation}
The complexity of obtaining an approximate solution follows from existing results for smooth strongly MVIs \citep{monteiro2012iteration,jiang2022generalized,huang2022approximation,ostroukhov2020tensor}.  For instance, we can use the restarted NPE/ARE method \citep{monteiro2012iteration,huang2022approximation}, which is essentially a second-order extragradient method \citep{korpelevich1976extragradient} with large adaptive stepsize $\tau_k = \gO(1/ (L_2 \| \vx_{k+1/2} - \vx_k  \|))$ \citep{monteiro2012iteration,huang2022approximation}. See Algorithm~\ref{alg:ARE-restart} for the complete procedure, whose guarantee is stated as follows.

\begin{lem}[{\citet[Theorem 3.2]{huang2022approximation}}] \label{lem:ARE-restart}
Let the operator $\mG: \gX \to \sR^d$ be $L_2$-second-order smooth satisfying \eqref{eq:F-pth-smooth} and $\eta^{-1}$-strongly monotone. Apply Algorithm \ref{alg:ARE-restart} with the parameters
\[
M = \Theta(L_2), \quad
K = \gO \left( (\eta L_2  R)^{2/3} \right), \quad S = \gO \left( \log \log (R/\delta) \right)
\]
can return a point $\vy_S \in \gX$ such that $\| \vy_S - \vy^* \| \le \delta$ in the total $p$th-order oracle complexity of $K\times S$, where $\vy^* = (\mG + \gN_\gX)^{-1}(\vzero)$ is the unique solution to the MVI induced by $\mG$ and $R = \|\vy_0 - \vy^* \|$.
\end{lem}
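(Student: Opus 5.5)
The plan is a restart argument: run the non-restarted second-order extragradient (NPE/ARE) scheme inside each epoch, and use strong monotonicity to turn its sublinear gap guarantee into a contraction of the distance to $\vy^*$. The contraction turns out to be superlinear, which gives the $\log\log$ number of restarts.

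\textbf{Step 1 (one epoch).} Consider one call of the inner NPE/ARE loop of Algorithm~\ref{alg:ARE-restart}, started from $\vy_s$, with $K$ iterations and $M=\Theta(L_2)$. I take as the base guarantee the standard estimate for the adaptive stepsize $\tau_k=\Theta(1/(L_2\|\vx_{k+1/2}-\vx_k\|))$ \citep{monteiro2012iteration,huang2022approximation}. For the $\tau$-weighted average $\bar\vy$ of the extrapolated points, and for every $\vy\in\gX$ with $\|\vy-\vy_s\|=\gO(R_s)$,
\[
\langle \mG(\vy),\bar\vy-\vy\rangle \le \gO\!\left(\frac{L_2 R_s^3}{K^{3/2}}\right),\qquad R_s:=\|\vy_s-\vy^*\|.
\]
This follows from two facts. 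First, $\sum_k \tau_k\langle \mG(\vx_{k+1/2}),\vx_{k+1/2}-\vy\rangle\le \tfrac12\|\vy_s-\vy\|^2$. Second, the large-step condition combined with $\sum_k\|\vx_{k+1/2}-\vx_k\|^2=\gO(R_s^2)$ and Hölder's inequality gives $\sum_k\tau_k\gtrsim K^{3/2}/(L_2R_s)$.

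\textbf{Step 2 (convert gap to distance).} I plug $\vy=\vy^*$ into the estimate and use the $\eta^{-1}$-strong monotonicity \eqref{eq:regularized-strongly-monotone} together with $\langle \mG(\vy^*)+\gN_\gX(\vy^*),\bar\vy-\vy^*\rangle\ge 0$. This gives
\[
\frac{1}{\eta}\|\bar\vy-\vy^*\|^2\le \langle \mG(\bar\vy),\bar\vy-\vy^*\rangle \lesssim \frac{L_2R_s^3}{K^{3/2}}.
\]
Averaging is handled through monotonicity in the usual way. Setting $\vy_{s+1}=\bar\vy$, we get $R_{s+1}^2\le c\,\eta L_2 R_s^3/K^{3/2}$.

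\textbf{Step 3 (choose $K$ and iterate).} Fix $K=\lceil C(\eta L_2R)^{2/3}\rceil$ for a large constant $C$, where $R=R_0$. Then $K^{3/2}\ge C^{3/2}\eta L_2 R$, so the recursion becomes
\[
\frac{R_{s+1}}{R}\le \frac{1}{2}\left(\frac{R_s}{R}\right)^{3/2}.
\]
This is a superlinear recursion, and it yields $R_s/R\le 2^{-(3/2)^s}$ up to constants. So $S=\gO(\log\log(R/\delta))$ epochs suffice to reach $R_S\le\delta$, and the total oracle cost is $K\times S$, as claimed.

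\textbf{Main obstacle.} The delicate point is Step 1 with constraints and a restricted comparison point. I need the averaged-iterate gap bound with the explicit $R_s^3/K^{3/2}$ dependence, and this requires the iterates to stay within $\gO(R_s)$ of $\vy^*$; I would prove that first via Fejér monotonicity of the extragradient step. I would also need the tensor subproblem \eqref{eq:first-order-tensor} for $\mG$ to be solved accurately enough that the large-step condition holds. For this I would follow the bisection-on-$\lambda$ argument of \citet{monteiro2012iteration}, noting that adding the linear term $(\vy-\vx)/\eta$ does not change the second-order smoothness constant $L_2$.
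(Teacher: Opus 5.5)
Your overall route is the restart argument behind the cited result of Huang and Zhang. You run one NPE/ARE epoch of fixed length, convert its $\gO(L_2R_s^3/K^{3/2})$ guarantee into a distance bound via $\eta^{-1}$-strong monotonicity, and observe that with $K\asymp(\eta L_2R)^{2/3}$ fixed the recursion $R_{s+1}/R\le\frac12(R_s/R)^{3/2}$ is superlinear, so $S=\gO(\log\log(R/\delta))$ restarts suffice. Steps 1 and 3 are fine, including your H\"older bound $\sum_k\tau_k\gtrsim K^{3/2}/(L_2R_s)$.

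Step 2, however, does not go through as written. Plugging $\vy=\vy^*$ into the averaged gap estimate bounds $\langle\mG(\vy^*),\bar\vy-\vy^*\rangle$, which is nonnegative and carries no distance information. Monotonicity only gives $\langle\mG(\vy^*),\bar\vy-\vy^*\rangle\le\langle\mG(\bar\vy),\bar\vy-\vy^*\rangle$, which is the wrong direction for the second inequality in your display. The epoch gives no control of $\mG$ at the averaged point $\bar\vy$, so ``averaging via monotonicity'' cannot supply that inequality.

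The repair is to apply strong monotonicity before averaging rather than after. Take $\vy=\vy^*$ in the un-averaged inequality $\sum_k\tau_k\langle\mG(\vx_{k+1/2}),\vx_{k+1/2}-\vy^*\rangle\le\frac12R_s^2$. Since $\vx_{k+1/2}\in\gX$ and $\vy^*$ solves the MVI, each inner product is at least $\eta^{-1}\|\vx_{k+1/2}-\vy^*\|^2$. Convexity of $\|\cdot-\vy^*\|^2$ then gives
\[
\frac1\eta\|\bar\vy-\vy^*\|^2\le\frac{\sum_k\tau_k\|\vx_{k+1/2}-\vy^*\|^2}{\eta\sum_k\tau_k}\le\frac{R_s^2}{2\sum_k\tau_k}\lesssim\frac{L_2R_s^3}{K^{3/2}},
\]
which is exactly the recursion you use in Step 3.

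This also removes your ``main obstacle''. You never need the gap at general comparison points, so no Fej\'er-type localization of the iterates is required. Likewise, no Monteiro--Svaiter bisection is needed. In Algorithm~\ref{alg:ARE-restart} the tensor step is an exact oracle and $\tau_k$ is set afterwards from $\|\vx_{k+1/2}-\vx_k\|$, so the large-step condition $\tau_k\|\vx_{k+1/2}-\vx_k\|=1/M$ holds by construction.
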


\begin{algorithm}[htbp]  
\caption{\textsf{Restarted-NPE}$(\mG,\vy_0,M, K,S)$}  \label{alg:ARE-restart}
\begin{algorithmic}[1] 
\State \textbf{for} $s = 0,\cdots, S-1$ 
\State \quad $\vy_{s,0} = \vy_s$ 
\State \quad \textbf{for} $k = 0,\cdots,K-1$
\State \quad \quad Perform a cubic regularized Newton step $\vy_{s,k+1/2} = \gT_\mG^2(\vy_{s,k};M)$ \vspace{1mm}
\State \quad \quad Set the adaptive stepsize $\tau_{s,k} = 1 / (M \| \vy_{s,k+1/2} - \vy_{s,k} \|)$) \vspace{1mm}
\State \quad \quad Perform an extragradient step \( \vy_{s,k+1} =\gT_\mG^1(\vy_{s,k};\tau_{s,k}^{-1}) \) \vspace{1mm}
\State \quad \textbf{end for}
\State \quad Restart at $\vy_{s+1}= \sum_{s=0}^{S-1} \tau_{s,k}~ \vy_{s,k+1/2}~ \big /  \sum_{s=0}^{S-1} \tau_{s,k}$
\State \textbf{end for} 
\State \textbf{return} $\vy_S$
\end{algorithmic}
\end{algorithm}

We remark that the above complexity of $K \times S$ can be reduced to $K + S$ with a refined analysis, as in \citet[Section 7.2]{jiang2022generalized} or \citet[Theorem 4.1]{huang2022approximation}. However, the slightly looser bound of $K \times S$ is enough for our subsequent analyses.


\subsection{Final Algorithm and Total Complexity}

\begin{algorithm}[htbp]  
\caption{\textsf{Halpern-NPE}$(\mF,\vx_0, \eta, T, M, \{K_t\}_{t=0}^{T-1},  \{S_t\}_{t=0}^{T-1})$}  \label{alg:inexact-halpern}
\begin{algorithmic}[1] 
\State \textbf{for} $t = 0,\cdots, T-1$ 
\State \quad Let $\mG_t(\vy) = \mF(\vy) + (\vy - \vx_t)/\eta$ and approximately solved $ \vzero \in \mG_t(\vy) + \gN_\gX(\vy)$ by
\[
\vy_t = \textsf{Restarted-NPE}(\mG_{t},\vx_t, M, K_t, S_t)
\]
\State \quad Perform the Halpern iteration
\[
\vx_{t+1} = \frac{1}{t+2} \vx_0 + \frac{t+1}{t+2} \vy_t
\]
\State \textbf{end for} 
\State \textbf{return} $\vx_T$
\end{algorithmic}
\end{algorithm}

By applying Algorithm \ref{alg:ARE-restart} to solve the resolvent operator in \eqref{eq:inexact-halpern}, we obtain our final high-order method to solve MVI problems in Algorithm \ref{alg:inexact-halpern}. Then, by appropriately selecting the stepsize $\eta = \Theta(T)$ as well as sub-solver parameter sequences $K_t$ and $S_t$, we can obtain the total complexity of Algorithm \ref{alg:inexact-halpern} as follows.

\begin{thm}[Theorem \ref{thm:main} for $p=2$] \label{thm:main-p-2}
Under Assumptions \ref{asm:X}-\ref{asm:pth-smooth} for $p = 2$, apply 
Algorithm \ref{alg:inexact-halpern} with parameters
\begin{align*}
T =& \gO \left( D \left(  \frac{L_{2}}{\epsilon}\right)^{1/2}\right), \quad
\eta = \Theta\left(\frac{T}{L_2 D} \right), \quad  M =\Theta(L_2), \\
K_t=&  \gO \left( \left( \frac{T}{t+1} \right)^{2/3} \right), \qquad S_t = \Theta( \log \log (t+2)),
\end{align*}
then the algorithm can ensure ${\rm res}(\vx_T) \le \eps$ in the total second-order oracle complexity of
\begin{equation} \label{eq:p2-complexity-bound}
    \gO(T \log \log T) = \tilde \gO( D \left( L_2/ \eps\right)^{1/2}),
\end{equation}
 where $D = \left\|\vx_{0}-\vx^*\right\|$ is the distance of the initial point $\vx_0 \in \gX$ to the optimal solution $\vx^*$.
\end{thm}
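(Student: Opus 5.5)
The plan is to combine Lemma~\ref{lem:inexact-halpern} (outer loop) with Lemma~\ref{lem:ARE-restart} (inner loop). The main work is bounding the initial distance $R_t$ of each subproblem in terms of the Halpern progress.

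\textbf{Step 1: the inexactness target.} For each $t$, the subproblem $\vzero\in \mG_t(\vy)+\gN_\gX(\vy)$ has the unique solution $\vy_t^*=\mP_\eta(\vx_t)$. The operator $\mG_t$ has the same second-order Lipschitz constant $L_2$ as $\mF$, since the linear term $(\vy-\vx_t)/\eta$ has zero second derivative. It is also $\eta^{-1}$-strongly monotone by \eqref{eq:regularized-strongly-monotone}. The inner solver is warm-started at $\vx_t$, so its initial distance is exactly
\[
R_t=\|\vx_t-\mP_\eta(\vx_t)\|=\eta\,{\rm res}(\vx_t),
\]
and the target accuracy required by \eqref{eq:cond-delta-t} is $\delta_t=R_t/(98\sqrt{t+2}\log(t+2))$. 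The ratio $R_t/\delta_t=98\sqrt{t+2}\log(t+2)$ does not depend on the unknown $R_t$. Hence $S_t=\Theta(\log\log(R_t/\delta_t))=\Theta(\log\log(t+2))$ restarts suffice, by Lemma~\ref{lem:ARE-restart}. (If $R_t=0$, the point $\vx_t$ is already a solution and there is nothing to prove.)

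\textbf{Step 2: bounding $R_t$ along the trajectory.} This is the step I expect to be the main obstacle, because Lemma~\ref{lem:ARE-restart} needs $K_t\gtrsim(\eta L_2R_t)^{2/3}$ while $R_t$ is not known in advance. I would argue by induction on $t$. Suppose \eqref{eq:cond-delta-t} held at every step $s<t$. The guarantee \eqref{eq:halpern-rate} of Lemma~\ref{lem:inexact-halpern} is proved for an arbitrary horizon, so applying it with horizon $t$ gives
\[
R_t=\eta\,{\rm res}(\vx_t)\le \frac{4D}{t+1}\quad (t\ge1),
\]
and $R_0\le 2D$ by nonexpansiveness of $\mP_\eta$ together with $\mP_\eta(\vx^*)=\vx^*$. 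I should double-check that the cited Theorem~2.1 of \citet{alacaoglu2024revisiting} indeed gives the bound at every intermediate iterate, not only at $T$. If it only gives the final-iterate bound, I would fall back on the standard Halpern potential argument, which also yields the crude uniform bound $\|\vx_t-\vx^*\|\le \gO(D)$ and hence $R_t\le \gO(D)$.

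\textbf{Step 3: parameter choices.} With $\eta=\Theta(T/(L_2D))$ we get
\[
\eta L_2R_t\le \Theta\!\left(\frac{T}{L_2D}\right)\cdot L_2\cdot\frac{4D}{t+1}=\Theta\!\left(\frac{T}{t+1}\right),
\]
so $K_t=\Theta((T/(t+1))^{2/3})$ meets the requirement of Lemma~\ref{lem:ARE-restart} with $M=\Theta(L_2)$. The inner guarantee then ensures \eqref{eq:cond-delta-t} at step $t$, which closes the induction.

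\textbf{Step 4: conclusion and total cost.} Lemma~\ref{lem:inexact-halpern} gives ${\rm res}(\vx_T)\le 4D/(\eta(T+1))=\Theta(L_2D^2/T^2)$, which is at most $\eps$ once $T=\Theta(D(L_2/\eps)^{1/2})$ with suitable constants. The total oracle cost is
\[
\sum_{t=0}^{T-1}K_tS_t\lesssim \log\log T\cdot T^{2/3}\sum_{t=0}^{T-1}(t+1)^{-2/3}=\gO(T^{2/3}\cdot T^{1/3}\log\log T)=\gO(T\log\log T).
\]
This is \eqref{eq:p2-complexity-bound}, and the estimate relies only on the exponent $2/3<1$, which makes the sum summable in the right sense.
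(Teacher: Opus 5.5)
Your proposal is correct and follows the paper's proof essentially step for step: induction on ${\rm res}(\vx_t)\le 4D/(\eta(t+1))$ with the base case from nonexpansiveness; the bound $R_t\le 4D/(t+1)$ giving $K_t=\Theta((\eta L_2R_t)^{2/3})=\gO((T/(t+1))^{2/3})$; the $t$-only ratio $R_t/\delta_t$ giving $S_t$; Lemma~\ref{lem:inexact-halpern} closing the induction; and the sum $\sum_t (T/(t+1))^{2/3}\log\log T=\gO(T\log\log T)$. Your worry in Step 2 is unnecessary, since neither the iteration nor the inexactness condition of Lemma~\ref{lem:inexact-halpern} depends on the horizon, so the lemma applies at every intermediate $t$. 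This matters because your proposed fallback $R_t=\gO(D)$ would only give $K_t=\Theta(T^{2/3})$ for every $t$, and hence a total cost of order $T^{5/3}$, which loses the claimed rate.
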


\begin{proof} 

Let us prove by induction that our parameter setting ensures the convergence rate of 
\begin{equation} \label{eq:induction-convergence-rate}
    {\rm res}(\vx_T): = \frac{\| \vx_T - \mP_\eta(\vx_T) \|}{\eta} \le  \frac{4 D}{ \eta (T+1)}.
\end{equation}
For $t=0$, we know from the non-expansiveness of $\mP_\eta$ and the fixed-point property $\vx^* = \mP_\eta(\vx^*)$ that
\begin{equation} \label{eq:induct-base-R0}
\| \vx_0 - \mP_\eta(\vx_0) \| \le \| \vx_0  - \mP_\eta(\vx^*) \| + \| \mP_\eta(\vx^*)  - \mP_\eta(\vx_0) \| \le 2 \| \vx_0 - \vx^* \| = 2D,    
\end{equation}
which established the induction base of \eqref{eq:induction-convergence-rate} by dividing $\eta$ on both sides. Now, we assume \eqref{eq:induction-convergence-rate} holds for all $t \le T-1$. Then, $R_t : = \| \vx_t - \mP_\eta(\vx_t) \|$, the initial distance of Algorithm \ref{alg:ARE-restart} satisfies that 
\begin{equation} \label{eq:Rt-rate}
    R_t \le \frac{4 D}{t+1}, \quad \forall t = 0,\cdots,T-1.
\end{equation}
Therefore, by Lemma \ref{lem:ARE-restart}, at each step the resolvent can be solved up to the approximation error $\delta_t$ in \eqref{eq:cond-delta-t} under the parameter setting \(M = \Theta(L_2)\), 
\[
K_t = \Theta \left( (\eta L_2 R_t)^{2/3} \right) = \Theta \left( \left( \frac{T R_t}{D} \right)^{2/3}  \right) = \gO \left( \left( \frac{T}{t+1} \right)^{2/3}  \right),
\]
and, by the definition of  $\delta_t = \Theta(R_t / (\sqrt{t+2} \log (t+2))$ in \eqref{eq:cond-delta-t}, 
\[
S_t = \Theta \left( \log \log (R_t/ \delta_t) \right) = \Theta \left( \log \log (t+2) \right).
\]
Therefore, the inexact condition in Lemma \ref{lem:inexact-halpern} is satisfied for $t \le T-1$, which implies the convergence rate of \eqref{eq:induction-convergence-rate} for $t =T$ by the lemma. This completes the induction.

Now, substituting our choices of $\eta$ in \eqref{eq:induction-convergence-rate} yields 
\[
{\rm res}(\vx_T) =\gO \left(  L_2 D^2/T^2 \right),
\]
which is smaller than $\epsilon$ by our setting of $T$. Finally, the total number of oracle calls can be bounded by
\begin{equation} \label{eq:tloglogt}
    \sum_{t=0}^{T-1} K_t S_t = \gO \left( \sum_{t=0}^{T-1} \left( \frac{T}{t+1} \right)^{2/3} \log \log T \right) = \gO(T \log \log T),
\end{equation}
which is equivalent to the bound of \(\tilde \gO( D \left( L_2/ \eps\right)^{1/2})\) in \eqref{eq:p2-complexity-bound} by substituting the choice of $T$.
\end{proof}

This theorem shows that the proposed Halpern-NPE converges at a fast rate of $\tilde \gO(T^{-2})$, which improves both the classical rate of $\gO(T^{-1.5})$ by NPE \citep{monteiro2012iteration} and the rate of $\tilde \gO(T^{-1.75})$ via primal-dual A-NPE \citep{chen2025solving} for minimax problems. In the next section, we generalize this theorem to all $p \ge 2$ and achieve the upper bound of $\tilde \gO(T^{-p})$ claimed by the main Theorem \ref{thm:main}.

\section{Generalizing the Result to All $p \ge 2$}

To achieve the rate of $\tilde \gO(T^{-p})$ for all $p \ge 2$, a natural idea is to also use the Halpern iteration with a large stepsize $\eta = \Theta(T^{p-1})$ and reuse the analysis in the proof of Theorem \ref{thm:main-p-2}. To ensure that each subproblem is solvable in $\tilde \gO(1)$ costs like in \eqref{eq:tloglogt}, the sub-solver for the resolvent operator should converges at least $ \tilde \gO( \exp( -  \mu^{1/(p-1)}~T))$ for $\mu$-strongly monotone operators, or equivalently (under black-box reductions \citep{allen2016optimal,ostroukhov2020tensor,huang2022approximation,lin2022perseus}),  the $\tilde \gO(T^{-(p-1)})$ for monotone operators.

However, the classical convergence rate of $\gO(T^{-(p+1)/2})$ achieved by high-order NPE \citep{bullins2022higher,huang2022approximation} does not satisfy the above requirement when $p \ge 4$. In the following, we first introduce an Anchored Tensor Method (ATM) that achieves the required convergence rate of $\gO(T^{-p})$ in Section \ref{subsec:ATM}, then we further accelerate it with the Halpern iteration as in $p=2$ to achieve the $\tilde \gO(T^{-p})$ convergence rate in Section \ref{subsec:Halpenr-ATM}.

\subsection{ATM Achieves the $\gO(T^{-(p-1)})$ Rate} \label{subsec:ATM}

Recall Section \ref{subsec:approx-resolvent-p2} that approximating the resolvent is 
\(\mP_\eta(\vx) = ({\rm Id} + \eta (\mF + \gN_\gX ))^{-1} (\vx)\) is 
equivalent to solving the MVI problem induced by the $\eta^{-1}$-strongly monotone operator $\mG(\vy) := \mF(\vy) + (\vy  -\vx) / \eta$. We introduce a novel Anchored Tensor Method (ATM) that can solve the problem in the $p$th-order oracle complexity of \( \gO( \eta^{1/(p-1)})\). It also implies a complexity of $\gO(\eps^{-1/(p-1)})$ for finding an $\eps$-solution in the monotone case by applying the same algorithm on an $\eps$-regularized operator \citep[Lemma 4.1]{chen2026solving}.

\begin{algorithm}[htbp]  
\caption{\textsf{Anchored-Tensor-Method}$(\mG,\vy_0, \mu, M, R, K, S)$}  \label{alg:anchoring}
\begin{algorithmic}[1] 
\State \textbf{for} $k = 0,\cdots, K-1$ 
\State \quad Select regularization $\mu_k$ according to \eqref{eq:rj-muj}.
\State \quad Define the regularized operator $\mG_k(\vy) = \mG(\vy) + (\mu_k - \mu) (\vy - \vy_0)$
\State \quad Perform a tensor step $\vy_{k+1} = \gT_{\mG_k}^p (\vy_k;M)$
\State \textbf{end for}
\State \textbf{for} $s = K+1,\cdots, K+S-1$ 
\State \quad Perform a tensor step $\vy_{k+1} = \gT_{\mG}^p (\vy_k;M)$
\State \textbf{end for}
\State \textbf{return} $\vy_{K+S}$
\end{algorithmic}
\end{algorithm}

The starting point of the anchored tensor method is the following local convergence guarantee for the tensor step \citep{huang2022approximation,lin2022perseus}.

\begin{lem}[{\citet[Theorem 3.5]{lin2022perseus}}] \label{lem:local-perseus}
Let the operator $\mG: \gX \to \sR^d$ be $L_p$-$pth$-order smooth satisfying \eqref{eq:F-pth-smooth} and $\mu$-strongly monotone satisfying \eqref{eq:regularized-strongly-monotone}. There exists a numerical constant $C$ such that The tensor step $\vy^+ = \gT_\mG^p(\vy; C L_p)$ satisfies
\[
\| \vy^+ - \vy^* \| \le \sqrt{C_p \eta L_p} \| \vy - \vy^* \|^{(p+1)/2}, \quad C_p: = \frac{2^p (5p-2)}{p!},
\]
where $\vy^* = (\mG + \gN_\gX)^{-1}(\vzero)$ is the unique solution to the MVI induced by $\mG$.
\end{lem}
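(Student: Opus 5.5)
The plan is a one-step analysis combining the optimality conditions of the tensor step with those of $\vy^*$. Write $\vy = \gT$-input, $\vy^+ = \gT_\mG^p(\vy;M)$ with $M = CL_p$, $r := \|\vy^+-\vy\|$, $e := \|\vy-\vy^*\|$, $e^+ := \|\vy^+-\vy^*\|$, and $\mu=1/\eta$.

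\textbf{Step 1: a master inequality from the two variational conditions.} By Definition \ref{dfn:mvi-tensor-step} there is a normal vector in $\gN_\gX(\vy^+)$. Testing it against $\vy^*$ gives
\[
\bigl\langle \bar\mG_\vy^p(\vy^+) + \tfrac{M}{p!}r^{p-1}(\vy^+-\vy), \vy^*-\vy^+\bigr\rangle \ge 0 .
\]
Likewise, $\vzero\in\mG(\vy^*)+\gN_\gX(\vy^*)$ gives $\langle \mG(\vy^*),\vy^+-\vy^*\rangle\ge0$. Adding these two inequalities and inserting $\pm \mG(\vy^+)$, we then use strong monotonicity \eqref{eq:regularized-strongly-monotone} on the left-hand side. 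This yields
\[
\mu (e^+)^2 \le \langle \mG(\vy^+)-\bar\mG_\vy^p(\vy^+),\vy^+-\vy^*\rangle - \tfrac{M}{p!}r^{p-1}\langle \vy^+-\vy,\vy^+-\vy^*\rangle .
\]

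\textbf{Step 2: bound the two terms on the right.} For the Taylor remainder we use the standard consequence of \eqref{eq:F-pth-smooth}, namely $\|\mG(\vy^+)-\bar\mG^p_\vy(\vy^+)\|\le \frac{L_p}{p!}r^p$. For the regularization term we apply the polarization identity $2\langle \vy^+-\vy,\vy^+-\vy^*\rangle = r^2+(e^+)^2-e^2$. Together these give
\[
\mu (e^+)^2 + \tfrac{M}{2p!}r^{p-1}\bigl(r^2+(e^+)^2\bigr) \le \tfrac{L_p}{p!}r^p e^+ + \tfrac{M}{2p!}r^{p-1}e^2 .
\]

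\textbf{Step 3: control the step length $r$ by $e$.} This is the step I expect to be the delicate one, since it is where the numerical constant $C$ is fixed. First I drop the $\mu$ term. Then, with $M=CL_p$ and $C$ large enough (e.g. $C\ge 2$), I apply Young's inequality to $L_p r^p e^+ \le \frac{M}{2}r^{p-1}\cdot\frac{r^2+(e^+)^2}{C}$-type terms, so that the Taylor term is absorbed into the left-hand side. What remains is $r^2+(e^+)^2 \lesssim e^2$, giving both $r\le 2e$ and $e^+\le 2e$; any crude constant suffices here.

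\textbf{Step 4: conclude.} Return to the inequality of Step 2, drop the nonnegative term $\frac{M}{2p!}r^{p-1}(r^2+(e^+)^2)$, and substitute $r\le 2e$. This gives
\[
\mu (e^+)^2 \le \tfrac{L_p}{p!}(2e)^p e^+ + \tfrac{M}{2p!}(2e)^{p-1}e^2 .
\]
Using $e^+\le 2e$ on the first term leaves $\mu (e^+)^2 \le \frac{c(p,C)\,L_p\,2^p}{p!}e^{p+1}$. Dividing by $\mu$ (i.e. multiplying by $\eta$) and taking square roots gives $e^+\le\sqrt{C_p\eta L_p}\, e^{(p+1)/2}$. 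The remaining work is bookkeeping: check that for the chosen absolute constant $C$ the prefactor $c(p,C)$ can be taken as $5p-2$, matching $C_p = 2^p(5p-2)/p!$. This tracking follows \citet[Theorem 3.5]{lin2022perseus}.
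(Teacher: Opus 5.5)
Your proof is correct, and it is more direct than what the paper does. The paper gives no argument for this lemma and imports it from Lin--Jordan. There, to my understanding, the local rate is derived for their restarted Perseus scheme, essentially by combining its gap-function guarantee with strong monotonicity, $\mu\|\hat\vy-\vy^*\|^2\lesssim \mathrm{gap}\lesssim L_p\|\vy-\vy^*\|^{p+1}$; the square root, the exponent $(p+1)/2$ and the constant $5p-2$ appear to be inherited from that analysis.

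You instead analyze the bare step $\gT^p_\mG(\vy;CL_p)$ that ATM actually uses. Your ingredients are the two variational inequalities, the three-point identity, the Taylor remainder and a self-bounding estimate on the step length.

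The ``bookkeeping'' you defer in Step 4 is not needed. With $C=2$, Young's inequality gives $r^2+(e^+)^2\le 2e^2$; if $r=0$, your Step 2 inequality already forces $e^+=0$. Step 4 then yields $\mu(e^+)^2\le \frac{2^pL_p}{p!}\bigl(2+\frac{C}{4}\bigr)e^{p+1}=\frac{5}{2}\cdot\frac{2^pL_p}{p!}\,e^{p+1}$. Since $\frac52\le 8\le 5p-2$ for $p\ge 2$, the stated bound follows a fortiori, and you do not need to reproduce Lin--Jordan's constant.

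What your route buys is a self-contained proof of the lemma exactly as formulated, with an explicit $C$ and a smaller constant. It even gives more: applying Cauchy--Schwarz to the regularization term in Step 1 yields $\mu e^+\le\frac{(1+C)L_p}{p!}r^p$. This is the order-$p$ local rate $e^+\le\frac{(1+C)2^{p/2}L_p}{p!\,\mu}e^p$, whose contraction radius has the same $(\mu/L_p)^{1/(p-1)}$ scaling that ATM needs. The citation route is shorter on the page, but it rests on reading a guarantee stated for Lin--Jordan's method as a statement about a single tensor step.
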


Define the local region radius $\rho(\mu)$ and contraction factor $\theta_p$ indicated by this lemma as
\[
\rho(\mu):= \frac{1}{2} \left( \frac{\mu}{C_p L_p} \right)^{1/(p-1)}, \qquad \theta_p:= 2^{-(p-1)/2} <1.
\]
Consequently, Lemma \ref{lem:local-perseus} implies hat
\begin{equation} \label{eq:y-plus-local}
    \| \vy - \vy^* \| \le \rho(\mu) \quad \Longrightarrow \quad \|\vy^+ - \vy^* \|\le  \theta_p \rho(\mu).
\end{equation}
It means that whenever $\vy$ enters the local region, the tensor update converges superlinearly to the minimizer. Based on this observation, we formally propose a two-phase Anchored Tensor Method in Algorithm \ref{alg:anchoring}:
\begin{enumerate}
    \item In the first phase with $K$ iterations, the algorithm iteratively performs the tensor step on the anchored/regularized operator \(\mG_k(\vy) := \mG(\vy) + (\mu_k-\mu)(\vy - \vy_0) \). The anchoring coefficient $\mu_k$ is chosen such that every tensor step lies in the local superlinear convergence region and $\mu_k$ decreases over iterations as the algorithm approaches the minimizer.
    \item At the beginning of the second phase, the anchoring coefficient $\mu_K$ has decreased to $\mu_K = \mu$ and the algorithm has entered the local region for the unregularized operator $\mG$. Therefore, the second phase reaches an $\delta$-solution in additional $S = \gO(\log \log \delta)$ iterations.
\end{enumerate}
To analyze the ATM method, we first give two lemmas for the anchored operator.

\begin{lem}[{\citet[Lemma A.1]{chen2024near}}]\label{lem:localization}
Let the operator $\mG$ with MVI solution $\vy^*$ satisfies Assumptions~\ref{asm:X}-\ref{asm:F}. For $\nu>0$, we define $\mG_\nu(\vy) = \mG(\vy) + \nu (\vy- \vy_0)$ be the regularized operator, and let $\vy_\nu^*$ be the unique solution of \(0\in \mF_{\nu}(\vy)+\gN_{\gX}(\vy)\). Then we have
\[
 {\left\|\vy_{\nu}^*-\vy_0 \right\|^{2}+\left\|\vy_{\nu}^*-\vy^*\right\|^{2}\leq\left\|\vy_0-\vy^*\right\|^{2}}
\]
\end{lem}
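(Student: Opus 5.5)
The plan is to prove this from the two variational inequalities that characterize $\vy_\nu^*$ and $\vy^*$, combined with monotonicity of $\mG$. (The statement writes $\mF_\nu$; I read it as the regularized operator $\mG_\nu$.)

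\textbf{Optimality conditions.} By definition of the normal cone, $\vy_\nu^*$ satisfies
\[
\langle \mG(\vy_\nu^*) + \nu(\vy_\nu^* - \vy_0), \vy - \vy_\nu^* \rangle \ge 0 \quad \text{for all } \vy \in \gX,
\]
and $\vy^*$ satisfies $\langle \mG(\vy^*), \vy - \vy^* \rangle \ge 0$ for all $\vy \in \gX$.

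\textbf{Combining them.} I take $\vy = \vy^*$ in the first inequality and $\vy = \vy_\nu^*$ in the second, then add the two. This gives
\[
\langle \mG(\vy_\nu^*) - \mG(\vy^*), \vy^* - \vy_\nu^* \rangle + \nu \langle \vy_\nu^* - \vy_0, \vy^* - \vy_\nu^* \rangle \ge 0 .
\]
By monotonicity (Assumption~\ref{asm:F}), the first term is $\le 0$. Since $\nu > 0$, it follows that $\langle \vy_\nu^* - \vy_0, \vy^* - \vy_\nu^* \rangle \ge 0$.

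\textbf{Polarization.} I write $a = \vy_\nu^* - \vy_0$ and $b = \vy^* - \vy_\nu^*$, so that $\vy^* - \vy_0 = a + b$. Then
\[
\|\vy_0 - \vy^*\|^2 = \|a\|^2 + \|b\|^2 + 2\langle a, b \rangle \ge \|a\|^2 + \|b\|^2 ,
\]
which is exactly the claim.

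\textbf{Well-posedness.} Existence and uniqueness of $\vy_\nu^*$ hold because $\mG_\nu$ is continuous and $\nu$-strongly monotone on the compact convex set $\gX$ (Assumption~\ref{asm:X}). The solution $\vy^*$ exists by hypothesis.

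There is no real obstacle here; the only point requiring care is choosing the right test points in the two variational inequalities.
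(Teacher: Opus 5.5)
Your proof is correct. You test the two variational inequalities at each other's solutions, cancel the $\mG$-terms by monotonicity to get $\langle \vy_\nu^* - \vy_0, \vy^* - \vy_\nu^* \rangle \ge 0$, and expand $\|\vy^* - \vy_0\|^2$; this is the standard argument, and the paper, which only cites \citet[Lemma A.1]{chen2024near} here, uses the same optimality-condition-plus-monotonicity pattern in Appendix~\ref{sec:proof-sensitivity}, so your route matches the intended one.
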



\begin{lem}[Stability of anchoring] \label{lem:sensitivity}
Under Assumptions \ref{asm:X}-\ref{asm:F}, for every $\nu_1 > \nu_2 > 0$,
\[
 \|\vy_{\nu_1}^* - \vy_{\nu_2}^* \|\leq \left( 1 - \frac{\nu_2}{\nu_1} \right)\|\vy_0-\vy^*\|.
\]
\end{lem}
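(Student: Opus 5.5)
We need to prove Lemma \ref{lem:sensitivity}: $\|\vy_{\nu_1}^*-\vy_{\nu_2}^*\|\le (1-\nu_2/\nu_1)\|\vy_0-\vy^*\|$.

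The plan is to compare the two optimality conditions through monotonicity and then invoke Lemma \ref{lem:localization} for the anchor distance. Write $\vu=\vy_{\nu_1}^*$ and $\vv=\vy_{\nu_2}^*$. By definition there are normal-cone elements $\vx_1\in\gN_\gX(\vu)$ and $\vx_2\in\gN_\gX(\vv)$ such that
\[
\mG(\vu)+\nu_1(\vu-\vy_0)+\vx_1=\vzero,\qquad \mG(\vv)+\nu_2(\vv-\vy_0)+\vx_2=\vzero .
\]
We subtract the two conditions and take the inner product with $\vu-\vv$. Monotonicity of $\mG$ gives $\langle\mG(\vu)-\mG(\vv),\vu-\vv\rangle\ge0$. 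Monotonicity of the normal cone gives $\langle \vx_1-\vx_2,\vu-\vv\rangle\ge0$. Together these yield
\[
\langle \nu_1(\vu-\vy_0)-\nu_2(\vv-\vy_0),\,\vu-\vv\rangle\le 0 .
\]

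Next we rewrite the bracket by splitting it against $\vu-\vv$:
\[
\nu_1(\vu-\vy_0)-\nu_2(\vv-\vy_0)=\nu_2(\vu-\vv)+(\nu_1-\nu_2)(\vu-\vy_0).
\]
Substituting this gives
\[
\nu_2\|\vu-\vv\|^2\le(\nu_1-\nu_2)\langle \vy_0-\vu,\vu-\vv\rangle\le(\nu_1-\nu_2)\|\vu-\vy_0\|\,\|\vu-\vv\| .
\]
From this we get $\|\vu-\vv\|\le\frac{\nu_1-\nu_2}{\nu_2}\|\vu-\vy_0\|$. That is off by a factor of $\nu_1/\nu_2$ from the target, so the split has to be done the other way.

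The better split uses $\nu_1(\vu-\vv)$ with the remainder on $\vv$:
\[
\nu_1(\vu-\vy_0)-\nu_2(\vv-\vy_0)=\nu_1(\vu-\vv)+(\nu_1-\nu_2)(\vv-\vy_0).
\]
This gives $\nu_1\|\vu-\vv\|^2\le(\nu_1-\nu_2)\|\vv-\vy_0\|\,\|\vu-\vv\|$, hence
\[
\|\vu-\vv\|\le\left(1-\frac{\nu_2}{\nu_1}\right)\|\vy_{\nu_2}^*-\vy_0\| .
\]
Lemma \ref{lem:localization}, applied with $\nu=\nu_2$, gives $\|\vy_{\nu_2}^*-\vy_0\|\le\|\vy_0-\vy^*\|$, which completes the bound.

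The only delicate point is choosing which regularization weight to pull out in the decomposition. Taking $\nu_1$, the larger one, produces the sharp factor $1-\nu_2/\nu_1$. The step where the normal-cone terms are discarded also needs care: it is justified because $\gN_\gX$ is the subdifferential of the indicator of $\gX$ and is therefore monotone. Existence and uniqueness of $\vy_\nu^*$ follow from strong monotonicity of $\mG_\nu$ together with Assumption \ref{asm:X}, as already implicit in Lemma \ref{lem:localization}.
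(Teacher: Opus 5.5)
Your proof is correct and, once the exploratory first split is dropped, it is exactly the paper's argument. Both apply monotonicity of $\mG+\gN_\gX$ to the two optimality conditions, decompose $\nu_1(\vy_{\nu_1}^*-\vy_0)-\nu_2(\vy_{\nu_2}^*-\vy_0)=\nu_1(\vy_{\nu_1}^*-\vy_{\nu_2}^*)+(\nu_1-\nu_2)(\vy_{\nu_2}^*-\vy_0)$, use Cauchy--Schwarz, and finish with Lemma~\ref{lem:localization} at $\nu=\nu_2$.
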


Lemma \ref{lem:sensitivity} follows from the first-order optimality condition of $\vy_\nu^*$ and the monotonicity of $\mG$. The formal proof is contained in Appendix \ref{sec:proof-sensitivity}. With the help of these two lemmas, we carefully design the schedule of $\mu_k$ such that the local region $r_k$ contracts fast. Formally, let $C_p $ and $\theta_p$ be the constants implied by Lemma~\ref{lem:local-perseus}, we define
\begin{equation} \label{eq:rj-muj}
    r_k := \max \left\{ \rho(\mu), \left( \frac{1}{R} + \frac{k (1 - \theta_p)}{2R (p-1)} \right)^{-1}  \right\}, \qquad
 \mu_k := \rho^{-1}(r_k) =  C_p L_p (2r_k)^{p-1},
\end{equation}
where $R>0$ satisfies $\| \vy_0 - \vy^*  \| \le R$. By this definition, $r_k$ is decreasing from $r_0$ that satisfies $r_0 \le R$  to $\rho(\mu)$. Consequently, $\mu_j$ is decreasing from $\mu_0$ such that $\mu_0 \ge \rho^{-1}(R)$ to $\mu$. We summarize the properties of these sequences in the following.

\begin{lem} \label{lem:sequence-property}
The sequences $\{r_k \}_{k=0}^{K-1}$ and $\{ \mu_k\}_{k=0}^{K-1}$ defined in \eqref{eq:rj-muj} satisfy the following:
\begin{enumerate}
\item $r_{k+1} / r_k \ge (1 + \theta_p)/2$.
    \item $ 1- \mu_{k+1} / \mu_k \le (1- \theta_p) r_k / (2 R)$.
\end{enumerate}
\end{lem}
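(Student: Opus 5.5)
The plan is a direct computation. Both claims reduce to comparing consecutive terms of the unclipped sequence
\[
s_k := \left( \frac{1}{R} + k c \right)^{-1}, \qquad c := \frac{1-\theta_p}{2R(p-1)},
\]
so that $r_k = \max\{\rho(\mu), s_k\}$. The sequence $s_k$ is positive and decreasing, hence so is $r_k$. The one identity I will use repeatedly is
\[
1 - \frac{s_{k+1}}{s_k} = \frac{c}{1/R + (k+1)c} = c\, s_{k+1} \le c\, s_k .
\]

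For item 1, I will use the elementary fact that for $a,b,b'>0$,
\[
\frac{\max\{a,b'\}}{\max\{a,b\}} \ge \min\Bigl\{1, \frac{b'}{b}\Bigr\}.
\]
To see this, note that $a \ge a\cdot\min\{1,b'/b\}$ and $b' = b\cdot(b'/b)$, so $\max\{a,b'\} \ge \min\{1,b'/b\}\max\{a,b\}$. Applying it with $a=\rho(\mu)$, $b=s_k$, $b'=s_{k+1}$ reduces item 1 to a lower bound on $s_{k+1}/s_k$. From the identity, and since $s_{k+1}\le s_0 = R$,
\[
\frac{s_{k+1}}{s_k} = 1 - c\,s_{k+1} \ge 1 - cR = 1 - \frac{1-\theta_p}{2(p-1)}.
\]
Because $p \ge 2$, this is at least $1 - (1-\theta_p)/2 = (1+\theta_p)/2$. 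Since also $1 \ge (1+\theta_p)/2$, the minimum in the elementary fact is at least $(1+\theta_p)/2$.

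For item 2, I start from $\mu_{k+1}/\mu_k = (r_{k+1}/r_k)^{p-1}$. Set $x = r_{k+1}/r_k \in (0,1]$, which lies in this range because $r$ is decreasing. Bernoulli's inequality gives
\[
1 - x^{p-1} \le (p-1)(1-x),
\]
so it suffices to show $1 - r_{k+1}/r_k \le c\, r_k$. I split into two cases.

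\begin{enumerate}
\item If $r_k = \rho(\mu)$, i.e.\ $\rho(\mu) \ge s_k$, then $\rho(\mu) \ge s_{k+1}$ as well. Hence $r_{k+1} = r_k$ and the left side is $0$.
\item Otherwise $r_k = s_k$. Since $r_{k+1} \ge s_{k+1}$, the identity gives
\[
1 - \frac{r_{k+1}}{r_k} \le 1 - \frac{s_{k+1}}{s_k} = c\, s_{k+1} \le c\, s_k = c\, r_k .
\]
\end{enumerate}

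Multiplying by $p-1$ then yields $1 - \mu_{k+1}/\mu_k \le (1-\theta_p)\, r_k/(2R)$.

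The only delicate point is the clipping by $\rho(\mu)$. It is what forces the case split, and the argument relies on the monotonicity of $s_k$ so that once the clip becomes active it stays active. The rest is routine algebra.
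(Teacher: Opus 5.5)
Your proof is correct and follows essentially the same route as the paper. Both arguments bound $r_{k+1}/r_k$ through the harmonic structure $1/r_{k+1}-1/r_k\le \frac{1-\theta_p}{2R(p-1)}$ (your identity $1-s_{k+1}/s_k=c\,s_{k+1}$ is its exact form), then pass to $\mu_{k+1}/\mu_k=(r_{k+1}/r_k)^{p-1}$ via a Bernoulli-type inequality; your explicit case split for the clipping at $\rho(\mu)$, and your use of $s_{k+1}\le R$ instead of the paper's $r_k\le R$ (which tacitly assumes $\rho(\mu)\le R$), make the argument slightly more careful than the paper's one-line reduction.
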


Lemma \ref{lem:sequence-property} follows from basic algebraic calculations, and the full proof is contained in Appendix \ref{sec:proof-rj-muj}. Now, we are ready to give the complexity bound of the ATM (Algorithm \ref{alg:anchoring}).

\begin{thm} \label{thm:ATM}
Let the operator $\mG: \gX \to \sR^d$ be $L_p$-$pth$-order smooth satisfying \eqref{eq:F-pth-smooth} and $\mu$-strongly monotone satisfying \eqref{eq:regularized-strongly-monotone}. Apply Algorithm \ref{alg:anchoring} with the parameters
\[
M = \Theta(L_p), \quad K = \gO\left( R (L_p/\mu)^{1/(p-1)} \right), \quad S = \gO( \log \log (D/ \delta) )
\]
can return a point $\vy_{K+S} \in \gX$ such that $\| \vy_{K+S} - \vy^* \| \le \delta$ in the total $p$th-order oracle complexity of $K+S$, where $\vy^* = (\mG + \gN_\gX)^{-1}(\vzero)$ is the unique solution to the MVI induced by $\mG$ and $R = \|\vy_0 - \vy^* \|$.
\end{thm}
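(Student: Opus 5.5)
The proof is an induction on the first phase. The invariant is that each iterate stays inside the local superlinear region of the current anchored problem. Write $\nu_k := \mu_k - \mu \ge 0$, so that $\mG_k = \mG + \nu_k(\cdot - \vy_0)$. This operator is $\mu_k$-strongly monotone and $L_p$-$p$th-order smooth, because the added term is affine and does not change $D^{p-1}$ for $p\ge 2$. Let $\vy_k^\star$ be the solution of $\vzero \in \mG_k(\vy)+\gN_\gX(\vy)$. The invariant to propagate is
\[
\|\vy_k - \vy_k^\star\| \le r_k = \rho(\mu_k), \qquad k=0,\dots,K,
\]
where the equality $r_k=\rho(\mu_k)$ holds by the definition of $\mu_k$ in \eqref{eq:rj-muj}.

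\textbf{Base case.} Since $r_0 \le R$, I will check directly that $r_0 = R$ (or handle the case $r_0=\rho(\mu)\ge R$ trivially). Lemma~\ref{lem:localization} gives $\|\vy_0-\vy_0^\star\| \le \|\vy_0-\vy^\star\|\le R$. Here $\mG$ itself plays the role of the monotone operator with solution $\vy^\star$, and $\nu_0$ is the regularization. This establishes the base case.

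\textbf{Inductive step.} Assume the invariant holds at step $k$. By \eqref{eq:y-plus-local}, applied to $\mG_k$ with strong-monotonicity modulus $\mu_k$ and $M=CL_p$, the tensor step gives $\|\vy_{k+1}-\vy_k^\star\| \le \theta_p r_k$. Next, the triangle inequality and Lemma~\ref{lem:sensitivity}, applied to the monotone operator $\mG' := \mG - \mu(\cdot-\vy_0)$ (more on this below) with $\nu_1 = \mu_k > \nu_2 = \mu_{k+1}$, give
\[
\|\vy_{k+1}-\vy_{k+1}^\star\| \le \theta_p r_k + \Big(1-\tfrac{\mu_{k+1}}{\mu_k}\Big) R \le \theta_p r_k + \tfrac{1-\theta_p}{2} r_k = \tfrac{1+\theta_p}{2} r_k \le r_{k+1}.
\]
The second inequality uses item 2 of Lemma~\ref{lem:sequence-property}, and the last uses item 1. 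This closes the induction.

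\textbf{The main subtlety} is the anchoring in this step. Lemma~\ref{lem:sensitivity} needs a monotone base operator whose solution is at distance at most $R$ from $\vy_0$. Writing $\mG_k = \mG' + \mu_k(\cdot-\vy_0)$ with $\mG' = \mG-\mu(\cdot-\vy_0)$ works because $\mG'$ is monotone, since $\mG$ is $\mu$-strongly monotone. However, the solution $\vy'^\star$ of $\mG'$ need not be within $R$ of $\vy_0$, and it need not even exist on a general domain. Compactness of $\gX$ (Assumption~\ref{asm:X}) guarantees existence. For the distance, I will avoid $\vy'^\star$ entirely by re-proving the stability estimate directly from the optimality conditions of $\vy_{\nu_1}^\star$ and $\vy_{\nu_2}^\star$. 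That argument bounds $\|\vy_{\nu_1}^\star-\vy_{\nu_2}^\star\| \le (1-\nu_2/\nu_1)\|\vy_0 - \vy_{\nu_2}^\star\|$. Lemma~\ref{lem:localization}, applied to $\mG$ with $\nu = \nu_2-\mu\ge 0$ and using that the limit point $\nu=0$ is $\vy^\star$, then gives $\|\vy_0-\vy_{\nu_2}^\star\|\le R$. If this route fails, the fallback is to replace $R$ by a constant multiple of $R$ in \eqref{eq:rj-muj}, which only changes constants.

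\textbf{Counting the first phase.} By \eqref{eq:rj-muj}, $r_k$ reaches $\rho(\mu)$ once $k(1-\theta_p)/(2R(p-1)) \ge 1/\rho(\mu)$. This gives
\[
K = \gO_p\big(R/\rho(\mu)\big) = \gO_p\big(R (L_p/\mu)^{1/(p-1)}\big),
\]
and at that point $\mu_K=\mu$, $\mG_K=\mG$, and $\|\vy_K-\vy^\star\|\le \rho(\mu)$.

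\textbf{Second phase.} Iterating Lemma~\ref{lem:local-perseus} on $\mG$ itself, the normalized error $e_s = \|\vy_s-\vy^\star\|/\rho(\mu)$ satisfies $e_{s+1}\le \theta_p\, e_s^{(p+1)/2}$ with $e_0\le 1$. This is doubly exponential decay, so $S=\gO(\log\log(\rho(\mu)/\delta))$ steps give accuracy $\delta$. That is bounded by $\gO(\log\log(D/\delta))$ as stated. Each iteration uses one $p$th-order oracle call, so the total complexity is $K+S$.
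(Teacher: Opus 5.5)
Your proposal is correct and follows the paper's proof. It uses the same invariant $\|\vy_k-\vy_k^\star\|\le r_k=\rho(\mu_k)$, the same triangle-inequality step combining the local contraction \eqref{eq:y-plus-local} with stability of the anchored solutions and Lemma~\ref{lem:sequence-property}, and the same counting for both phases. Your direct derivation of $\|\vy_k^\star-\vy_{k+1}^\star\|\le(1-\mu_{k+1}/\mu_k)\|\vy_{k+1}^\star-\vy_0\|\le(1-\mu_{k+1}/\mu_k)R$ is sound: you use monotonicity of $\mG-\mu(\cdot-\vy_0)$ and then Lemma~\ref{lem:localization} for $\mG$ with $\nu=\mu_{k+1}-\mu$. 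This actually supplies a justification the paper glosses over, because applying Lemma~\ref{lem:sensitivity} literally with the true regularization $\mu_k-\mu$ only yields the factor $1-\frac{\mu_{k+1}-\mu}{\mu_k-\mu}$, which equals $1$ at the last step of phase 1 and is too weak.
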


\begin{proof}
Let us prove by induction that our parameter setting ensures that
\begin{equation} \label{eq:induction-local-region}
    \|\vy_k - \vy_k^*  \| \le r_k = \rho(\mu_k),
\end{equation}
where $\vy_k^*$ is the unique solution of the MVI induced by the anchored operator $\mG_k$ that satisfies \( 0 \in \mG_k(\vy) + \gN_\gX(\vy) \). For $k=0$, we know from Lemma \ref{lem:localization} that \( \|\vy_0  -\vy_0^* \| \le R \), which established the induction base since $r_0 = R$ by definition. Now, we assume \eqref{eq:induction-local-region} holds for the $k$th iteration. Then, by \eqref{eq:y-plus-local}, 
\[
\| \vy_{k+1} - \vy_k^* \| \le \theta_p r_k.
\]
Moreover, by Lemma \ref{lem:sensitivity} and Lemma \ref{lem:sequence-property} (part 2), we have
\[
\| \vy_{k+1}^* - \vy_{k}^* \| \le \left( 1 - \frac{\mu_{k+1}}{\mu_k} \right) R \le \frac{(1 - \theta_p) r_k}{2}.
\]
Combining the above two inequalities and using the triangle inequality, we obtain 
\[
\| \vy_{k+1} - \vy_{k+1}^* \| \le \| \vy_{k+1} - \vy_k^* \| + \| \vy_{k+1}^* - \vy_{k}^* \| \le \frac{(1 + \theta_p) r_k}{2} \le r_{k+1},
\]
where the last step uses Lemma \ref{lem:sequence-property} (part 1). This completes the induction. Consequently, all the tensor step in phase 1 lies in the local superlinear convergence region in Lemma \ref{lem:local-perseus}. We let the total number of iterations of phase 1 be the smallest number such that $\mu_k$ decays to $\mu$. By \eqref{eq:rj-muj}, such a $K$ satisfies
\[
K = \gO\left( R (L_p/\mu)^{1/(p-1)} \right).
\]
After phase 1 ends, we have $\mu_K = \mu$ and thus $\mG_K = \mG$. Since the initialization of phase 2 already lies in the local region for $\mG$, by \eqref{eq:y-plus-local}, phase 2 can find a $\delta$-solution in $S = \gO(\log \log (D/\delta)$ iterations.
\end{proof}

Treating the $\gO(\log \log (D/ \delta))$ factor as a constant, the above theorem shows that ATM achieves a new complexity upper bound of $\gO( (L_p/\mu)^{1/(p-1)})$ for $L_p$-$p$th-order smooth and $\mu$-strongly monotone operators. This improves the classical $\gO( (L_p/\mu)^{2/(p+1)})$ complexity achieved by high-order NPE \citep{monteiro2012iteration,bullins2022higher,lin2022perseus,huang2022approximation,adil2022optimal} for any $p \ge 4$. As we have discussed, it also implies a complexity of $\gO(\eps^{-1/(p-1)})$ for finding an $\eps$-solution in the monotone case by applying the same algorithm on an $\eps$-regularized operator \citep[Lemma 4.1]{chen2026solving}.

\subsection{Halpern-ATM Achieves the $\tilde \gO(T^{-p})$ Rate} \label{subsec:Halpenr-ATM}

\begin{algorithm}[htbp]  
\caption{\textsf{Halpern-ATM}$(\mF,\vx_0, \eta, T,  M, D, \{K_t\}_{t=0}^{T-1},  \{S_t\}_{t=0}^{T-1})$}  \label{alg:inexact-halpern-anchored}
\begin{algorithmic}[1] 
\State \textbf{for} $t = 0,\cdots, T-1$ 
\State \quad Let $\mG_t(\vy) = \mF(\vy) + (\vy - \vx_t)/\eta$ and approximately solved $ \vzero \in \mG_t(\vy) + \gN_\gX(\vy)$ by
\[
\vy_t = \textsf{Anchored-Tensor-Method}(\mG_t, \vx_{t}, 1/
\eta, M, R_t,K_t,S_t),
\]
where $R_t = 4 D/ (t+1)$ according to \eqref{eq:Rt-rate}.
\State \quad Perform the Halpern iteration
\[
\vx_{t+1} = \frac{1}{t+2} \vx_0 + \frac{t+1}{t+2} \vy_t
\]
\State \textbf{end for} 
\State \textbf{return} $\vx_T$
\end{algorithmic}
\end{algorithm}

The ATM introduced in the previous section achieves the complexity upper bound of $\tilde \gO( \eps^{-1/(p-1)} )$ for monotone problems. In this section, we further show an accelerated complexity of $\tilde \gO( \eps^{-1/p})$ by applying ATM to solve the resolvent operator in the Halpern iteration in Section \ref{subsec:inext-halpern}. We present the resulting method in Algorithm \ref{alg:inexact-halpern-anchored}. In light of Theorem \ref{thm:main-p-2}, we choose a large stepsize $\eta = \Theta(T^{p-1})$, which indicates that the outer Halpern iteration converges at the rate of $\gO( (\eta T)^{-1}) = \gO(T^{-p})$. We can then complete the proof by providing a global upper bound of the total oracle complexity via a similar analysis to the proof of Theorem~\ref{thm:main-p-2} in $p=2$. This leads to the following main theorem.

\begin{thm}[Full version of Theorem \ref{thm:main} ] \label{thm:main-full-version}
Under Assumptions \ref{asm:X}-\ref{asm:pth-smooth}, for any integer $p \ge 2$, apply 
Algorithm \ref{alg:inexact-halpern-anchored} with parameters
\begin{align*}
T =& \gO \left( D \left(  \frac{L_{p}}{\epsilon}\right)^{1/p}\right), \quad
\eta = \Theta\left(\frac{T^{p-1}}{L_p D^{p-1}} \right), \quad  M =\Theta(L_p), \\
K_t=&  \gO \left(  \frac{T}{t+1} \right), \qquad S_t = \Theta( \log \log (t+2)),
\end{align*}
then the algorithm can ensure ${\rm res}(\vx_T) \le \eps$ in the total second-order oracle complexity of
\begin{equation} \label{eq:p-complexity-bound}
    \gO(T \log T ) = \tilde \gO( D \left( L_p/ \eps\right)^{1/p}),
\end{equation}
 where $D = \left\|\vx_{0}-\vx^*\right\|$ is the distance of the initial point $\vx_0 \in \gX$ to the optimal solution $\vx^*$.
\end{thm}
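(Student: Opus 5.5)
We will mirror the induction in the proof of Theorem \ref{thm:main-p-2}, replacing Lemma \ref{lem:ARE-restart} by Theorem \ref{thm:ATM} as the inner solver. Concretely, we prove by induction on $t$ that
\[
{\rm res}(\vx_t) \le \frac{4D}{\eta(t+1)}, \qquad\text{equivalently}\qquad R_t := \|\vx_t - \mP_\eta(\vx_t)\| \le \frac{4D}{t+1}.
\]
The base case $t=0$ is exactly \eqref{eq:induct-base-R0}: it follows from non-expansiveness (Proposition \ref{prop:resolvent-geom}) and $\vx^*=\mP_\eta(\vx^*)$. For the inductive step, assume the bound holds for all $t'\le t$.

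At outer step $t$, ATM is applied to $\mG_t(\vy)=\mF(\vy)+(\vy-\vx_t)/\eta$ with initialization $\vy_0=\vx_t$. This operator is $\mu=1/\eta$ strongly monotone. It is also $L_p$-$p$th-order smooth for $p\ge 2$, because the linear term contributes nothing to $D^{p-1}\mG_t$ beyond a constant (for $p=2$ the Jacobian shift $I/\eta$ is constant). Its solution is $\vy_t^*=\mP_\eta(\vx_t)$, so the initial distance is exactly $R_t$. The algorithm is fed the upper bound $4D/(t+1)\ge R_t$, and this is legitimate because the definition \eqref{eq:rj-muj} and the proof of Theorem \ref{thm:ATM} only require $\|\vy_0-\vy^*\|\le R$. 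Theorem \ref{thm:ATM} then yields $\|\vy_t-\mP_\eta(\vx_t)\|\le\delta_t$ with
\[
K_t=\gO\!\left(\frac{D}{t+1}(\eta L_p)^{1/(p-1)}\right)=\gO\!\left(\frac{D}{t+1}\cdot\frac{T}{D}\right)=\gO\!\left(\frac{T}{t+1}\right),
\]
where we used the choice $\eta=\Theta\big(T^{p-1}/(L_pD^{p-1})\big)$. The second phase costs $S_t=\gO(\log\log(R_t/\delta_t))$.

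The main obstacle is the choice of accuracy $\delta_t$. It is defined in \eqref{eq:cond-delta-t} relative to the unknown quantity $R_t$, whereas the ATM phase-2 count is stated in terms of the a priori bound (written $D/\delta$ in Theorem \ref{thm:ATM}). If $R_t$ is much smaller than $4D/(t+1)$, the ratio $(4D/(t+1))/\delta_t$ can blow up. I would resolve this with the quadratic-type local convergence of Lemma \ref{lem:local-perseus}, as follows.
\begin{itemize}
\item After phase 1 the iterate lies within $\rho(\mu)$ of $\vy^*$.
\item Each further step maps an error $e$ to at most $\sqrt{C_p\eta L_p}\,e^{(p+1)/2}$.
\item Hence the error after $s$ steps is doubly exponentially small relative to $\rho(\mu)$.
\end{itemize}
To reach a target of order $R_t/(\sqrt{t+2}\log(t+2))$ it then suffices to run $\gO(\log\log(\cdot))$ steps. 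If $R_t$ is tiny, I would argue either that the target is met almost immediately, or that $\vy_0$ is already within the target: when $R_t\le\rho(\mu)$, every step contracts relative to $R_t$ itself, since Lemma \ref{lem:local-perseus} applies to the true distance. Either way this gives $S_t=\Theta(\log\log(t+2))$ plus at most a $\log\log T$ overhead. Treating this as absorbed into the stated $\gO(T\log T)$ bound, the condition of Lemma \ref{lem:inexact-halpern} holds for every $t\le T-1$, and the lemma closes the induction at $t+1$.

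Finally, Lemma \ref{lem:inexact-halpern} with the chosen $\eta$ gives
\[
{\rm res}(\vx_T)\le \frac{4D}{\eta(T+1)}=\gO\!\left(\frac{L_pD^p}{T^p}\right)\le\eps
\]
for $T=\Theta\big(D(L_p/\eps)^{1/p}\big)$. The total oracle count is
\[
\sum_{t=0}^{T-1}(K_t+S_t)=\gO\!\left(\sum_{t}\frac{T}{t+1}+T\log\log T\right)=\gO(T\log T).
\]
The harmonic sum produces the $\log T$ factor, which differs from the $p=2$ case, where the exponent $2/3<1$ avoided it. This gives $\tilde\gO\big(D(L_p/\eps)^{1/p}\big)$.
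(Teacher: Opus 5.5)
Your induction, base case, computation $K_t=\gO(T/(t+1))$ from $\eta=\Theta(T^{p-1}/(L_pD^{p-1}))$, and harmonic sum are exactly the paper's proof in Appendix~\ref{sec:proof-main-thm}. The paper writes $S_t=\Theta(\log\log(R_t/\delta_t))$ as if ATM were run with the true distance $R_t=\|\vx_t-\mP_\eta(\vx_t)\|$, yet Algorithm~\ref{alg:inexact-halpern-anchored} feeds it $\bar R_t:=4D/(t+1)$. So the issue you flag is real, and the paper does not address it. Your patch, however, does not close it.

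First, $\vy_0$ is never within the target, because $\|\vy_0-\vy^*\|=R_t>\delta_t$ whenever $R_t>0$. Second, contraction ``relative to $R_t$ itself'' holds only for unanchored tensor steps started within $R_t$ of $\vy^*$. When $R_t\ll\rho(\mu)<\bar R_t$, phase~1 still runs its $K_t$ anchored steps, because its length is set by $\bar R_t$. At the start of phase~2, Theorem~\ref{thm:ATM} only certifies $\|\vy_{K_t}-\vy^*\|\le\rho(\mu)$. From there Lemma~\ref{lem:local-perseus} only gives an $\gO(\log\log(\rho(\mu)/\delta_t))$ step count, which grows with $\rho(\mu)/R_t$. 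That is not $\gO(\log\log T)$, since nothing bounds $R_t$ from below. Hence the claimed ``at most $\log\log T$ overhead'' is unjustified.

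The missing observation is that phase~1 also scales with the true distance. Assume phase~1 is nonempty, so $r_0=\bar R_t$; if it is empty, your own remark covers it. Let $\alpha:=R_t/\bar R_t\le 1$ and strengthen the induction in Theorem~\ref{thm:ATM} to $\|\vy_k-\vy_k^*\|\le\alpha r_k$.
\begin{itemize}
\item The base case follows from Lemma~\ref{lem:localization}.
\item The drift $\|\vy_{k+1}^*-\vy_k^*\|$ from Lemma~\ref{lem:sensitivity} is proportional to the \emph{true} $\|\vy_0-\vy^*\|=R_t$. By Lemma~\ref{lem:sequence-property} it is therefore at most $(1-\theta_p)\alpha r_k/2$.
\item Lemma~\ref{lem:local-perseus} applied to the $\mu_k$-strongly monotone $\mG_k$, with $\mu_k=C_pL_p(2r_k)^{p-1}$, gives $\|\vy_{k+1}-\vy_k^*\|\le\theta_p r_k\alpha^{(p+1)/2}\le\theta_p\alpha r_k$.
\end{itemize}
So the scaled invariant propagates, and phase~2 starts within $\alpha\rho(\mu)\le R_t$ of $\vy^*$. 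Iterating Lemma~\ref{lem:local-perseus} then bounds the error after $s$ phase-2 steps by $\theta_p^{(q^s-1)/(q-1)}R_t$, where $q=(p+1)/2$. Hence $s=\gO(\log\log(t+2))$ steps reach $\delta_t$ uniformly in $R_t$, and the rest of your argument goes through.
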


The proof can be done by replacing the sub-solver in $p=2$ with ATM for general $p \ge 2$ and following the analysis in the proof of Theorem~\ref{thm:main-p-2}. The formal proof is contained in Appendix \ref{sec:proof-main-thm}. 
This theorem shows that the proposed Halpern-ATM converges at a fast rate of $\tilde \gO(T^{-p})$, which improves both the classical rate of $\gO(T^{-2/(p+1)})$ by high-order NPE \citep{bullins2022higher,adil2022optimal,lin2022perseus,huang2022approximation} and the rate of $\tilde \gO(T^{-4/(3p+1)})$ via high-order primal-dual A-NPE \citep{chen2026solving} for minimax problems.

\section{Conclusion and Future Work}

In this paper, we propose a novel Halpern-NPE method that achieves a fast convergence rate of $\tilde \gO(T^{-2})$ for solving MVI problems, which improves the classical rate of $\gO(T^{-1.5})$ by NPE \citep{monteiro2012iteration} and the rate of $\tilde \gO(T^{-1.75})$ via primal-dual A-NPE \citep{chen2025solving} for minimax problems. We also provide the $p$th-order generalization of our method, which can achieve the fast rate of $\tilde \gO(T^{-p})$.

However, the optimality of our methods remains open for $p \ge 2$, where the lower bound in \citep{chen2026solving} is $\Omega(T^{-2.5})$ for $p=2$ and $\Omega(T^{-(3p-1)/2})$ in general. We hope the gap can be closed in future work.

\section*{Acknowledgment} As noted in the abstract, the rates are first proved with the assistance of AI. The initial resolvent sub-solver given by AI has triple loops, and the author simplified it with human-readable analyses to Algorithm \ref{alg:ARE-restart} for $p=2$ and the single-loop method Algorithm \ref{alg:anchoring} for $p \ge 2$. See Appendix \ref{sec:alg-AI} for the full algorithm and analyses provided by AI.

\bibliographystyle{plainnat}
\bibliography{sample}

\newpage
\appendix
\section{Proof of Lemma \ref{lem:sensitivity}} \label{sec:proof-sensitivity}

\begin{proof}
The first-order optimality conditions of $\vy_{\nu_1}^*$ and $\vy_{\nu_2}^*$ indicate that $-\nu_1(\vy_{\nu_1}^* - \vy_0) \in \mG(\vy_{\nu_1}^*) + \gN_\gX(\vy_{\nu_1}^*)$ and $-\nu_2(\vy_{\nu_2}^* - \vy_0) \in \mG(\vy_{\nu_2}^*) + \gN_\gX(\vy_{\nu_2}^*)$. Then, the monotonicity of $\mG+\gN_{\gX}$ gives
\[
\left\langle-\nu_1(\vy_{\nu_1}-\vy_0)+\nu_2(\vy_{\nu_2}-\vx_0),\vy_{\nu_1} - \vy_{\nu_2} \right\rangle \ge 0.
\]
Using $\vy_{\nu_1}-\vy_0= (\vy_{\nu_1} - \vy_{\nu_2})+(\vy_{\nu_2}-\vy_0)$ and Lemma~\ref{lem:localization}, we obtain 
\[
(\nu_1-\nu_2) \|\vy_0-\vy^*\| \left\|\vy_{\nu_1} - \vy_{\nu_2}\right\| \ge \nu_1 \left\|\vy_{\nu_1} - \vy_{\nu_2} \right\|^{2}.
\]
\end{proof}

\section{Proof of Lemma \ref{lem:sequence-property}} \label{sec:proof-rj-muj}

\begin{proof}

Note that we only need to prove the inequalities hold before $r_k$ reaches $\rho(\mu)$, or, equivalently, before $\mu_k$ reaches $\mu$. Then, the truncated $r_k$ and $\mu_k$ after that trivially satisfy the same inequalities.
\paragraph{Proof of part 1.} From the definition of $r_k$ in \eqref{eq:rj-muj}, we have
\[
\frac{1}{r_{k+1}} - \frac{1}{r_k} \le \frac{1- \theta_p}{2 R(p-1)}.
\]
Rearranging, we have
\begin{equation} \label{eq:ratio-rj}
    \frac{r_{k+1}}{r_k} \ge \left( 1 + \frac{(1- \theta_p) r_k}{2R(p-1)} \right)^{-1} 
\end{equation}
Finally, using the inequality $(1+x)^{-1} \ge 1-x$ for $x \ge 0$ and the facts $r_k \le R$ and $p \ge 2$ gives 
\[
\frac{r_{k+1}}{r_k} \ge 1 - \frac{(1- \theta_p)r_k}{2R(p-1)}\ge 1 - \frac{1- \theta_p}{2} = \frac{1+ \theta_p}{2}.
\]
\paragraph{Proof of part 2.} From the definition of $\mu_k$ and $r_k$ in \eqref{eq:rj-muj}, we have
\[
1- \frac{\mu_{k+1}}{\mu_k} = 1 - \left(\frac{r_{k+1}}{r_k} \right)^{p-1}
\]
Using \eqref{eq:ratio-rj} and then applying the inequality $1 - (1+x)^{-(p-1)} \le (p-1)x$ for $x \ge 0$, we obtain that
\[
1- \frac{\mu_{k+1}}{\mu_j} = 1 - \left(\frac{r_{k+1}}{r_k} \right)^{p-1} \le 1 - \left( 1 + \frac{(1- \theta_p)r_k}{2R(p-1)} \right)^{p-1} \le \frac{(1- \theta_p)r_k}{2R}
\]

\end{proof}

\section{Proof of Theorem \ref{thm:main-full-version}} \label{sec:proof-main-thm}
\begin{proof} 
Following the proof of Theorem \ref{thm:main-p-2}, we show by induction that our parameter setting ensures the convergence rate of 
\begin{equation} \label{eq:induction-convergence-rate-again}
    {\rm res}(\vx_T): = \frac{\| \vx_T - \mP_\eta(\vx_T) \|}{\eta} \le  \frac{4 D}{ \eta (T+1)}.
\end{equation}
The induction base of $t=0$ follows from \eqref{eq:induct-base-R0} by dividing $\eta$ on both sides. Now, we assume \eqref{eq:induction-convergence-rate-again} holds for all $t \le T-1$. Then, $R_t : = \| \vx_t - \mP_\eta(\vx_t) \|$, the initial distance of Algorithm \ref{alg:anchoring} satisfies 
\[
  R_t \le \frac{4 D}{t+1}, \quad \forall t = 0,\cdots,T-1.
\]
Therefore, by Theorem \ref{thm:ATM}, at each step the resolvent can be solved up to the approximation error $\delta_t$ in \eqref{eq:cond-delta-t} under the parameter setting \(M = \Theta(L_p)\), 
\[
K_t = \Theta \left( R_t (\eta L_p)^{1/(p-1)} ) \right) = \Theta \left(  \frac{T R_t}{D}   \right) = \gO \left(\frac{T}{t+1}   \right),
\]
and, by the definition of  $\delta_t = \Theta(R_t / (\sqrt{t+2} \log (t+2))$ in \eqref{eq:cond-delta-t}, 
\[
S_t = \Theta \left( \log \log (R_t/ \delta_t) \right) = \Theta \left( \log \log (t+2) \right).
\]
Therefore, the inexact condition in Lemma \ref{lem:inexact-halpern} is satisfied for $t \le T-1$, which implies the convergence rate of \eqref{eq:induction-convergence-rate-again} for $t =T$ by the lemma. This completes the induction.

Now, substituting our choices of $\eta$ in \eqref{eq:induction-convergence-rate-again} yields 
\[
{\rm res}(\vx_T) =\gO \left(  L_p D^p/T^p \right),
\]
which is smaller than $\epsilon$ by our setting of $T = \gO( D \left( L_p/ \eps\right)^{1/p})$. Finally, the total number of oracle calls can be bounded by
\begin{equation} \label{eq:tloglogt}
    \sum_{t=0}^{T-1} K_t + S_t = \gO \left( \sum_{t=0}^{T-1} \left( \frac{T}{t+1} + \log \log (t+2)\right)\right) = \gO(T \log T),
\end{equation}
which is equivalent to the bound of \(\tilde \gO( D \left( L_p/ \eps\right)^{1/p})\) in \eqref{eq:p-complexity-bound} by substituting the choice of $T$.
\end{proof}

\section{Initial Algorithm and Analysis by AI} \label{sec:alg-AI}

The initial resolvent sub-solver given by AI has triple loops whose full procedure is presented in Algorithm~\ref{alg:anchoring-triple-loop}. The initial proof by AI is also accessible through the following link:
\begin{center}
    https://chatgpt.com/share/6a6c372c-6efc-83e8-a378-c6aa1ea3b527
\end{center}
The authors simplified the algorithm and its proof by using the \textsf{Restarted-NPE} algorithm \citep{huang2022approximation} for $p=2$ and the single-loop \textsf{Anchored-Tensor-Method} introduced in Algorithm \ref{alg:anchoring} for general $p \ge 2$.

\begin{algorithm}[htbp]  
\caption{\textsf{Triple-Looped-Anchored-Tensor-Method}$(\mG,\vy_0,R,S,K)$}  \label{alg:anchoring-triple-loop}
\begin{algorithmic}[1] 
\State $R_0 = R$ 
\State \textbf{for} $s = 0,\cdots,S-1$ 
\State \quad $\vy_{s,0} = \vy_s$
\State \quad \textbf{for} $k = 0,\cdots, K-1$ 
\State \quad \quad Define the anchoring coefficient
\[
\mu_k = \frac{L_p R_s^{p-1}}{(h_p+ k \Delta_p)^{p-1}}, \quad {\rm where} \quad h_p = \frac{1}{2} c_p^{-1/(p-1)}, ~~ \Delta_p = \frac{h_p}{8 (p-1)}, ~~ c_p = \frac{2^p (5p-2)}{p!}.  
\]
\State \quad \quad Define the regularized operator $\mG_k(\vy) = \mG(\vy) + \mu_k (\vy - \vy_{s})$
\State \quad \quad $\vy_{s,k,0} = \vy_{s,k}$
\State \quad \quad \textbf{for} $j = 0,\cdots,5$
\State \quad \quad \quad Perform one tensor step $ \vy_{s,k,j+1} = \gT^p_{\mG_k}(\vy_{s,k,j}; 5L_p)$
\State \quad \quad \textbf{end for}
\State \quad \quad $\vy_{s,k+1} = \vy_{s,k,6}$
\State \quad \textbf{end for}
\State \quad Change the anchor point to $\vy_{s+1} = \vy_{s,K}$ and contract the region by $R_{s+1} = R_s/4$
\State \textbf{end for}
\State \textbf{return} $\vy_S$
\end{algorithmic}
\end{algorithm}

\end{document}